\pgfplotsset{compat=1.18}
\newtheorem{theorem}{Theorem}[section]
\newtheorem{corollary}[theorem]{Corollary}
\newtheorem{proposition}[theorem]{Proposition}
\newtheorem{lemma}[theorem]{Lemma}
\theoremstyle{definition}
\newtheorem{remark}[theorem]{Remark}
\newtheorem{definition}[theorem]{Definition}
\crefname{inequality}{inequality}{inequalities} 
\Crefname{inequality}{Inequality}{Inequalities}
\def\Coh{\ensuremath{\mathrm{Coh}}}
\def\Hom{\ensuremath{\mathrm{Hom}}}
\def\Db{\ensuremath{\mathrm{D}^b}}
\def\R{\ensuremath{\mathbb R}}
\newcommand{\ch}[1]{\operatorname{ch}_{{#1}}}
\newcommand{\rk}{\operatorname{rk}}
\renewcommand\subsection{%
  \@startsection{subsection}{2}{\z@}%
    {3.0ex \@plus 1ex \@minus .2ex}
    {1.5ex \@plus .2ex}
    {\normalfont\bfseries}
}
\title{Stronger Bogomolov--Gieseker type inequality on quintic threefold}
\author{Chunkai Xu}
\date{Nov 2025}
\begin{document}
\begin{abstract}
   We establish a stronger Bogomolov--Gieseker type inequality for slope-semistable sheaves on the smooth quintic threefold. Our approach combines a refined restriction theorem for tilt-stable objects with explicit Clifford-type bounds for semistable bundles on plane quintic curves. As a consequence, we obtain an explicit piecewise linear inequality on the Chern characters of any slope-semistable sheaf improving upon the classical Bogomolov--Gieseker bound and implying Toda’s conjectural inequality.
    
    The method also yields a stronger Bogomolov--Gieseker type inequality on smooth quintic surfaces.
    
    These results provide new evidence toward the existence of a Bridgeland stability condition of Gepner type on the quintic threefold.
\end{abstract}

\address{Mathematics Institute \\
 University of Warwick \\
 CV4 7AL \\
 United Kingdom}

\email{Chunkai.Xu@warwick.ac.uk}

\keywords{Bogomolov--Gieseker type inequality, quintic threefold, Stability conditions}
\subjclass[2020]{14F06, 14F08}

\maketitle

\section{Introduction}
The classical result of Bogomolov and Gieseker provides a fundamental inequality for stable sheaves on smooth projective varieties.
\begin{theorem}[Bogomolov--Gieseker Inequality \cite{bogomolov_holomorphic_1979,gieseker_theorem_1979}]
Let $X$ be a smooth projective complex variety and $H$ an ample divisor on $X$. For any torsion-free $H$-slope-semistable sheaf $E$ on $X$, we have
\begin{equation*}
    \Delta(E) \cdot H^{\dim(X) - 2} \geq 0,
\end{equation*}
where the discriminant is defined by
\begin{equation*}
    \Delta(E) \coloneqq \ch{1}(E)^2 - 2\ch{0}(E)\ch{2}(E).
\end{equation*}
\end{theorem}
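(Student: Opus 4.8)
The plan is to reduce the statement to the case of surfaces and then run the classical Bogomolov argument there. Since $\Delta$ is a codimension-two class, if $S = D_1 \cap \dots \cap D_{\dim(X)-2}$ is a smooth surface cut out by general members $D_i \in |mH|$, then functoriality of the Chern character gives $\Delta(E|_S) = m^{\dim(X)-2}\,\Delta(E)\cdot H^{\dim(X)-2}$, so the sign of $\Delta(E)\cdot H^{\dim(X)-2}$ coincides with that of the genuine surface discriminant $\Delta(E|_S)$. By the Mehta--Ramanathan restriction theorem, for $m \gg 0$ and $S$ general the restriction $E|_S$ is again torsion-free and slope-semistable with respect to $H|_S$; thus it suffices to prove $\Delta(F) \geq 0$ for every slope-semistable torsion-free sheaf $F$ on a smooth projective surface.

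First I would reduce to the $\mu$-stable case. Taking a Jordan--Hölder filtration of $F$, whose graded pieces are $\mu$-stable of the common slope $\mu(F)$, I would use the behaviour of the discriminant under an extension $0 \to A \to F \to B \to 0$ with $\mu(A) = \mu(B) = \mu(F)$. Writing $\xi := \ch{1}(A)/\rk(A) - \ch{1}(B)/\rk(B)$, a direct computation yields
\begin{equation*}
    \frac{\Delta(F)}{\rk(F)} = \frac{\Delta(A)}{\rk(A)} + \frac{\Delta(B)}{\rk(B)} - \frac{\rk(A)\,\rk(B)}{\rk(F)}\,\xi^2 .
\end{equation*}
Because $\xi \cdot H = \mu(A) - \mu(B) = 0$, the Hodge index theorem gives $\xi^2 \leq 0$, so the correction term is nonnegative. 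Iterating over the filtration shows that $\Delta(F)/\rk(F)$ dominates a nonnegative combination of the discriminants of the stable graded pieces, and it is enough to treat $\mu$-stable $F$.

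For $\mu$-stable $F$ of rank $r$, consider the endomorphism sheaf $F \otimes F^{\vee}$. In characteristic zero tensor products of slope-semistable sheaves are slope-semistable, so $F \otimes F^{\vee}$ is $\mu$-semistable of slope $0$, with $\ch{1} = 0$, rank $r^2$, and $\ch{2}(F \otimes F^{\vee}) = 2\,\ch{0}(F)\ch{2}(F) - \ch{1}(F)^2 = -\Delta(F)$. Suppose for contradiction that $\Delta(F) < 0$. For each $k$ the tensor power $F^{\otimes k}$ is again $\mu$-semistable, and the normalized discriminant $\Delta/\ch{0}^2$ is additive under tensor product, giving $\Delta(F^{\otimes k}) = k\,r^{2k-2}\Delta(F)$; hence the slope-$0$ semistable sheaf $G_k := F^{\otimes k} \otimes (F^{\otimes k})^{\vee}$ satisfies, by Hirzebruch--Riemann--Roch (using $\ch{1}(G_k) = 0$),
\begin{equation*}
    \chi(G_k) = \rk(G_k)\,\chi(\mathcal{O}_S) + k\,r^{2k-2}\,\lvert \Delta(F) \rvert = \rk(G_k)\left(\chi(\mathcal{O}_S) + k\,r^{-2}\,\lvert \Delta(F) \rvert\right).
\end{equation*}
On the other hand $G_k$ is self-dual of slope $0$ with $\mu_{\max}(G_k) = 0$ and $\mu_{\max}(G_k \otimes K_S) = \deg_H(K_S)$ independent of $k$, so the Le Potier--Simpson estimate bounds both $h^0(G_k)$ and, via Serre duality, $h^2(G_k) = h^0(G_k \otimes K_S)$ by a constant multiple of $\rk(G_k)$ that does not depend on $k$. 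Comparing with $\chi(G_k) \leq h^0(G_k) + h^2(G_k)$ forces a contradiction for $k \gg 0$, since the left-hand side grows linearly in $k$ per unit rank while the right-hand side stays bounded per unit rank. Therefore $\Delta(F) \geq 0$.

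The main obstacle is precisely this surface estimate. Two ingredients are essential and specific to it: the characteristic-zero fact that tensor powers of slope-semistable sheaves remain slope-semistable, and the Le Potier--Simpson bound controlling $h^0$ of a semistable sheaf purely in terms of its rank and maximal slope. The delicate point is the quantitative comparison --- arranging that the Euler characteristic outgrows the cohomological bounds after amplification by tensor powers. By contrast, the two reduction steps are formal once the Mehta--Ramanathan restriction theorem and the Hodge index theorem are available.
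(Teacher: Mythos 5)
This theorem is stated in the paper as classical background and is not proved there; the author simply cites Bogomolov and Gieseker, so there is no in-paper argument to compare against. Your proposal is, in substance, the standard classical proof (essentially the one in Huybrechts--Lehn): reduce to a surface by Mehta--Ramanathan, reduce to the (semi)stable case via the Hodge index computation for the discriminant of an extension, and then run Bogomolov's tensor-power amplification against the Le Potier--Simpson $h^0$-bound and Serre duality. All the numerical identities you use check out: the extension formula for $\Delta/\rk$, the additivity of $\Delta/\rk^2$ under tensor product, $\ch{2}(F\otimes F^\vee)=-\Delta(F)$, and the linear-in-$k$ growth of $\chi(G_k)/\rk(G_k)$. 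The two heavy inputs you isolate (semistability of tensor products in characteristic zero, and the Le Potier--Simpson estimate) are exactly the right ones, and neither depends on the Bogomolov inequality, so there is no circularity.

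The one point you should make explicit is the passage from torsion-free to locally free sheaves, which your argument tacitly assumes in several places: the identity $\Delta(E|_S)=m^{\dim X-2}\,\Delta(E)\cdot H^{\dim X-2}$ requires $E$ to be locally free along $S$ (otherwise $\iota^*E$ and $L\iota^*E$ differ in $\ch{2}$); the self-duality $G_k^\vee\cong G_k$ and the Serre-duality identification $h^2(G_k)=h^0(G_k\otimes K_S)$ need $F$ locally free; and semistability of tensor products is cleanest for locally free sheaves. The standard fix is to replace $E$ by its double dual at the outset: $E^{\vee\vee}$ is again slope-semistable, is locally free away from codimension $3$ (hence locally free on a surface, and a general complete-intersection surface avoids its singular locus in higher dimension), and satisfies $\Delta(E^{\vee\vee})\cdot H^{\dim X-2}\le \Delta(E)\cdot H^{\dim X-2}$ because $\ch{2}$ increases by an effective cycle under double dualization. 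With that one sentence added, the proof is complete.
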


The Bogomolov--Gieseker (BG) inequality plays an important role in the study of stability conditions and moduli spaces of sheaves. It is a natural question to seek refinements of the BG inequality for special varieties; see for instance \cite[Question 5.1]{BM23}. 

Motivated by Gepner type stability conditions on graded matrix factorizations, Toda proposed in \cite{Toda2017GepnerPoint} a stronger inequality on quintic threefolds, and proved it for $\rk(E) = 2$ case.
\begin{theorem}[{\cite[Conjecture $1.1$]{Toda2017GepnerPoint}}]\label{TodaConj0}
    Let $X\subset \mathbb{P}^4_{\mathbb{C}}$ be a smooth quintic threefold and $H: = c_1(\mathcal{O}_X(1))$. Then for any torsion-free $H$-slope-stable sheaf $E$ on $X$ with $\ch{1}(E)/\rk (E) = -H/2$, we have
    \begin{equation}\label{TodaConj1}
        \frac{H\cdot \Delta(E)}{\rk(E)^2} > 1.5139\cdots,
    \end{equation}
    where the right-hand side of \eqref{TodaConj1} is an irrational real number lying in $\mathbb{Q}(e^{2\pi \sqrt{-1}/5})$. Equivalently,
    \begin{equation}\label{TodaConj2}
    \frac{H\ch{2}(E)}{H^3\rk(E)} < -0.02639\cdots
    \end{equation}
\end{theorem}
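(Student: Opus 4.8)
The plan is to deduce \Cref{TodaConj0} from a stronger, piecewise-linear Bogomolov--Gieseker type inequality on a smooth quintic surface, which I will in turn prove by a two-step restriction to plane quintic curves combined with explicit Clifford-type bounds there. First I would pass from the threefold to a surface. Let $S \in |H|$ be a general hyperplane section; it is a smooth quintic surface with $K_S = H|_S =: H_S$, and a general $C \in |H_S|$ is a smooth plane quintic of genus $6$. Since $\Delta(E)$ is a codimension-two cycle and $[S] = H$, intersecting gives $H \cdot \Delta(E) = \Delta(E|_S)$, while the hypothesis $\ch{1}(E)/\rk(E) = -H/2$ restricts to $\ch{1}(E|_S)/\rk(E|_S) = -H_S/2$. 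The delicate point is that $S$ lies in the minimal linear system $|H|$ rather than in $|mH|$ for $m \gg 0$, so ordinary Mehta--Ramanathan restriction does not by itself preserve slope-semistability. I would instead regard $E$ as a $\nu_{\alpha,\beta}$-tilt-semistable object for suitable parameters and apply the refined restriction theorem for tilt-stable objects, using the classical Bogomolov--Gieseker inequality as input to control the restriction to the class $H$. This reduces the problem to proving $\Delta(E|_S)/\rk(E|_S)^2 > 1.5139\cdots$ for slope-semistable sheaves on $S$ with slope $-H_S/2$.

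Next I would restrict once more, from $S$ to a plane quintic $C \in |H_S|$. Applying the refined restriction theorem to the tilt-semistable object $E|_S$ produces a semistable bundle $V = E|_C$ on $C$ of rank $r = \rk(E)$ and degree $d = \ch{1}(E|_S) \cdot H_S = -\tfrac{5}{2} r$ (an integer, since $\operatorname{Pic}(X) = \mathbb{Z} H$ forces $r$ to be even). The defining sequence $0 \to E|_S(-H_S) \to E|_S \to V \to 0$ together with Riemann--Roch on $S$ and on $C$ expresses $\chi$ in terms of $\ch{2}(E|_S)$, hence in terms of $\Delta(E|_S)$. The key consequence is a dichotomy: if $\Delta(E|_S)$ were smaller than the asserted bound, then $\ch{2}(E|_S)$ would be large enough to force a suitable twist $V \otimes \mathcal{O}_C(kH_S)$ to carry strictly more global sections than any semistable bundle of that rank and slope on $C$ is permitted to have.

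The permitted number of sections is controlled in the third step by an explicit Clifford-type bound for semistable bundles on the plane quintic $C$. Because $C$ carries the $g^2_5$ cut out by lines in $\mathbb{P}^2$, its Clifford index equals $1$ rather than the generic value $2$ for genus $6$, and this special geometry yields a bound on $h^0$ strictly stronger than the general estimate $h^0 \le r + d/2$. Feeding this bound back through Riemann--Roch produces a piecewise-linear lower bound for $\Delta(E|_S)/\rk(E|_S)^2$ as a function of the slope; evaluating at the slope $-H_S/2$ and comparing with the value $1.5139\cdots \in \mathbb{Q}(e^{2\pi\sqrt{-1}/5})$ coming from the Gepner-type central charge yields the strict inequality \eqref{TodaConj1}. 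The equivalent form \eqref{TodaConj2} then follows from $\ch{1}(E)^2 \cdot H = \tfrac{1}{4}\rk(E)^2 H^3$.

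The step I expect to be the main obstacle is the third one: establishing a higher-rank Clifford bound on plane quintics that is at once valid for every semistable bundle arising as such a restriction and sharp enough to beat the transcendental-looking constant $1.5139\cdots$ rather than merely recovering a weaker rational bound. Controlling the interaction between the Harder--Narasimhan filtration of $V$ and the special pencil $g^2_5$, and verifying that the refined restriction theorem genuinely applies for the minimal class $H$ rather than a high multiple, are the two places where the argument is most likely to require real work.
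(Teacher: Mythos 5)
Your proposal follows essentially the same route as the paper: reduce to an everywhere tilt-stable object, restrict from the threefold to a quintic surface in $|H|$ via the refined restriction theorem for tilt-stable objects, restrict again to a plane quintic in $|H_S|$, and convert explicit Clifford-type bounds on $h^0$ into a bound on $\ch{2}$ via Riemann--Roch; at $\mu_H(E) = -1/2$ the paper's resulting bound is $\xi_H(E) \le -\tfrac{1}{20}$, i.e. $H\cdot\Delta(E)/\rk(E)^2 \ge \tfrac{5}{4} + \tfrac{1}{2} = \tfrac{7}{4} > 1.5139\cdots$, exactly as you intend. Two corrections to the write-up: the restriction theorem does \emph{not} produce a semistable bundle on the curve or surface (that happens only when the two tilt-slopes $\nu_{\alpha,0,H}(E)$ and $\nu_{\alpha,0,H}(E(-H)[1])$ coincide), so one must instead use the controlled interval $[\mu^-_{H_C},\mu^+_{H_C}]$ of Harder--Narasimhan slopes together with the convex-roof version of the Clifford bound (\Cref{lem:convexfunction}); and the hypotheses of \Cref{RestrictionTheorem} --- that $E$ and $E(-H)[1]$ are \emph{both} tilt-semistable at a common point $(\beta,\alpha)=(0,\alpha)$ --- do not hold automatically for a slope-stable sheaf, which is why the paper first passes to a minimal-discriminant, everywhere-tilt-stable counterexample via \Cref{LemmaReductionToStableObjects} before restricting.
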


In this paper, we establish a stronger Bogomolov--Gieseker type inequality on quintic threefolds, which in particular implies Toda's conjecture.

For a torsion-free coherent sheaf $E$ on an $n$-dimensional polarized smooth projective variety $(X,H)$, we denote 
\begin{align*}
    \mu_H(E)\coloneqq\frac{H^{n-1}\ch{1}(E)}{H^n\rk(E)};\quad\xi_H(E)\coloneqq\frac{H^{n-2}\ch{2}(E)}{H^n\rk(E)}.
\end{align*}

\begin{theorem}[See \Cref{MainTheorem} and \Cref{MainCorollary}]
    Let $X\subset \mathbb{P}^4_{\mathbb{C}}$ be a smooth quintic threefold and $H: = c_1(\mathcal{O}_X(1))$. Let $F$ be a torsion-free $H$-slope-semistable sheaf in $\Coh(X)$. Assume that
    \begin{equation*}
        \mu_H(F)\in [-1,1].
    \end{equation*}
    Then
    \begin{equation}\notag 
         \xi_H(F)\leq 
    \begin{cases}
    -\frac{1}{2}|\mu_H(F)| & \text{when } 0\leq |\mu_H(F)| \leq \frac{1}{4}; \\[0ex]
    \frac{1}{2}|\mu_H(F)| - \frac{1}{4} & \text{when } |\mu_H(F)|\in[\frac{1}{4},\frac{5}{13}]\cup[\frac{8}{13},\frac{3}{4}]; \\[0ex]
    -\frac{3}{20}|\mu_H(F)|  & \text{when } \frac{5}{13} \leq |\mu_H(F)|\leq \frac{6}{13}; \\[0ex]
    \frac{1}{2}|\mu_H(F)|-\frac{3}{10} & \text{when } \frac{6}{13}\leq |\mu_H(F)| \leq \frac{7}{13}; \\[0ex]
    \frac{23}{20}|\mu_H(F)| - \frac{13}{20} & \text{when } \frac{7}{13}\leq |\mu_H(F)| \leq \frac{8}{13}; \\[0ex]
    \frac{3}{2}|\mu_H(F)| - 1 & \text{when } \frac{3}{4}\leq |\mu_H(F)| \leq 1.
\end{cases}
    \end{equation}
\end{theorem}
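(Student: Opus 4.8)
The plan is to reduce the statement on the threefold $X$ to an equivalent statement on a quintic surface, prove the latter by restricting to a plane quintic curve, and feed in Clifford-type bounds there. For the normalisation, note first that both sides depend only on $|\mu_H(F)|$. Passing to the reflexive hull $F^{\vee\vee}$ leaves $\rk$ and $\ch{1}$ unchanged and can only increase $H\cdot\ch{2}$ (the cokernel is supported in codimension $\ge 2$), so it suffices to treat reflexive $F$; then the derived dual $F\mapsto F^{\vee}$ preserves $\ch{0}$ and $\ch{2}$, negates $\ch{1}$, and preserves $H$-slope-semistability, so I may assume $\mu_H(F)\in[0,1]$ and drop the absolute values. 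Next, let $i\colon S\hookrightarrow X$ be a general hyperplane section; $S$ is a smooth quintic surface with $H_S:=H|_S$, $H_S^2=5$, $K_S=H_S$ and $\chi(\mathcal{O}_S)=5$. By a restriction theorem, in the refined tilt-stable form needed to retain semistability on the degree-$5$ section, the restriction $i^*F$ is a torsion-free $H_S$-slope-semistable sheaf on $S$. Since $i^*$ is a ring homomorphism on cohomology, the invariants match exactly: $\rk(i^*F)=\rk(F)$, $H_S\cdot\ch{1}(i^*F)=H^2\cdot\ch{1}(F)$ and $\int_S\ch{2}(i^*F)=H\cdot\ch{2}(F)$, so that $\mu_{H_S}(i^*F)=\mu_H(F)$ and $\xi_{H_S}(i^*F)=\xi_H(F)$. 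It therefore suffices to prove the same piecewise-linear bound for slope-semistable sheaves on $S$, i.e.\ the surface statement announced in the abstract.

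On $S$ fix a smooth plane quintic $C\in|H_S|$, of genus $g=6$, with $2g-2=10$ and $\deg_C\mathcal{O}_C(H_S)=5$. The sought bound strengthens the surface Bogomolov--Gieseker inequality, and the additional input is a uniform control of the global sections of the restriction of $F$ (and its twists) to $C$. Passing to the tilted heart and to the tilt-semistable Harder--Narasimhan factors of $i^*F$, the refined restriction theorem for tilt-stable objects ensures that each factor restricts to a \emph{semistable} bundle $V$ on $C$, with $\rk V=\rk(F)$ and $\deg V=H^2\cdot\ch{1}(F)$. For such $V$ a Clifford-type theorem on the plane quintic bounds $h^0\big(C,V\otimes\mathcal{O}_C(nH_S)\big)$ as a piecewise-linear function of the slope $\mu_C(V)+5n$: it vanishes below slope $0$, equals the Riemann--Roch value above slope $10$, and in between obeys the Clifford bound sharpened by the special geometry of $C$ (its $g^1_4$ and Clifford index $2$).

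To convert these curve estimates into an upper bound on $\int_S\ch{2}$, I would use Riemann--Roch on $S$, namely $\chi(S,E)=\int_S\ch{2}(E)-\tfrac12 H_S\cdot\ch{1}(E)+5\rk(E)$, together with the standard restriction sequences twisted by $\mathcal{O}_S(nH_S)$. Telescoping these over the integers $n$ for which the restricted slope lies in $[0,10]$, bounding each term by the curve Clifford estimate, and invoking tilt-semistability to kill the relevant higher cohomology, produces an upper bound on $\int_S\ch{2}$ and hence on $\xi_{H_S}$. The piecewise-linear profile of the final bound, and in particular the breakpoints $\tfrac14,\tfrac{5}{13},\tfrac{6}{13},\tfrac{7}{13},\tfrac{8}{13},\tfrac34$, emerges by optimising this estimate over the integer twist $n$ and over the regimes of the Clifford bound, with each linear piece corresponding to a single optimal configuration and continuity checked at the breakpoints.

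The crux is twofold. First, classical restriction theorems only control semistability after passing to a high multiple $|mH|$, whereas the entire gain here depends on restricting to the actual degree-$5$, genus-$6$ curve; establishing the refined restriction theorem for tilt-stable objects that preserves semistability down to this fixed section is the decisive technical step. Second, a generic Clifford inequality is too weak to yield the stated constants: one needs the explicit higher-rank Clifford-type bounds special to the plane quintic, plus the delicate bookkeeping---telescoping twists, vanishing of higher cohomology, and the final integer optimisation---that turns curve data into exact rational breakpoints on $X$. Checking that these ingredients conspire to give precisely the six linear pieces, rather than a weaker envelope, is where the real difficulty lies.
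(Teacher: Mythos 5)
Your high-level plan (threefold $\to$ quintic surface $\to$ plane quintic curve, with Clifford-type bounds and Riemann--Roch) is the same as the paper's, but there are two genuine gaps in how you carry it out. First, you assert that the refined restriction theorem lets $i^*F$ \emph{remain slope-semistable} on the hyperplane section. That is not what the restriction theorem in the paper provides, and it is false in general: restricting a tilt-semistable object to $|H|$ only gives control of the interval $[\mu^-_{H_S}(F|_S),\mu^+_{H_S}(F|_S)]$ in terms of the two tilt slopes $\nu_{\alpha,0,H}(F)$ and $\nu_{\alpha,0,H}(F(-H)[1])$; semistability of the restriction holds only in the degenerate case where these coincide. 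The entire point of the paper's Lemma \ref{RestrictionTheorem} is to drop the semistability conclusion and instead trap the Harder--Narasimhan slopes in an explicit interval, which is then fed into a convexity argument over the HN factors (Lemma \ref{LemmaOnChOnThreefolds} on the surface, Lemma \ref{lem:convexfunction} on the curve). Your argument, as written, relies on a statement that would fail.

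Second, your reduction claims it suffices to prove ``the same piecewise-linear bound'' on the surface, but the surface inequality (breakpoints $\tfrac{7}{46},\tfrac{7}{20},\tfrac{13}{20},\tfrac{39}{46}$) is a different function from the threefold one (breakpoints $\tfrac14,\tfrac{5}{13},\tfrac{6}{13},\tfrac{7}{13},\tfrac{8}{13},\tfrac34$), and the threefold bound does not follow from the surface bound over the whole range. In the paper, only the middle range $|\mu_H(F)|\in[\tfrac{5}{13},\tfrac{8}{13}]$ is deduced from the surface result: there one checks that the HN slopes of $F|_S$ land inside $[\tfrac{7}{20},\tfrac{13}{20}]$, where the surface bound is the single line $\tfrac12 x-\tfrac3{10}$, and additivity of $\xi_H$ transfers it back. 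The outer ranges $[0,\tfrac{5}{13}]\cup[\tfrac{8}{13},1]$ are quoted from Li's earlier theorem and are not reproved here; your proposal gives no mechanism producing the breakpoints $\tfrac14,\tfrac{5}{13},\tfrac{8}{13},\tfrac34$. Relatedly, your ``telescoping over integer twists $n$'' on the surface is not how the constants arise: the paper uses exactly two evaluations, $\chi(\mathcal{O}_{S},F)\le \hom(\mathcal{O}_S,F)+\hom(\mathcal{O}_S,F^\vee(H))$ via Serre duality (or the analogous pair with $F(H)$ and $F^\vee$ for small slope), each bounded by $h^0$ of a restriction to $C$ whose HN slopes are pinned in $[0,\tfrac67]$ or below $2$ so that the specific Clifford pieces apply. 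Without that precise bookkeeping the stated rational constants do not come out.
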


Along the way, we also obtain a stronger Bogomolov--Gieseker type inequality for quintic surfaces.
\begin{theorem}[See \Cref{ChInequalityOnSurface} and \Cref{CorollaryOnSheaves}]
    Let $S_5\subset \mathbb{P}^3_{\mathbb{C}}$ be a smooth quintic surface and $H\coloneqq c_1(\mathcal{O}_X(1))$. Let $F$ be a torsion-free $H$-slope-semistable sheaf in $\Coh(X)$ with
    \begin{equation*}
        \mu_H(F)\in (0,1). 
    \end{equation*}
    Then the following piecewise inequality holds:
    \begin{equation}\notag
        \xi_H(F)\leq 
    \begin{cases}
    \frac{13}{20}\mu_H(F) - \frac{3}{20} & \text{when } 0 < \mu_H(F) \leq \frac{7}{47}; \\[0ex]
    -\frac{5}{14}\mu_H(F)  & \text{when } \frac{7}{47} \leq  \mu_H(F)\leq \frac{7}{20}; \\[0ex]
    \frac{1}{2}\mu_H(F)-\frac{3}{10} & \text{when } \frac{7}{20}\leq \mu_H(F) \leq \frac{13}{20}; \\[0ex]
    \frac{19}{14}\mu_H(F) - \frac{6}{7} & \text{when } \frac{13}{20}\leq \mu_H(F)< \frac{40}{47}; \\[0ex]
    \frac{7}{20}\mu_H(F) & \text{when } \frac{40}{47}\leq \mu_H(F)< 1.
\end{cases}
    \end{equation}
\end{theorem}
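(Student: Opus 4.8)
The plan is to reduce the surface inequality to a bound on the dimensions of global sections of the restriction of $F$ to a general hyperplane section, which is a smooth plane quintic curve $C$ of genus $g=6$, and then to feed in the explicit Clifford-type bounds for semistable bundles on $C$. First I would record the numerical reduction. Write $r=\rk(F)$; since $F$ is torsion-free it is locally free along a general $C\in|H|$. By adjunction $K_{S_5}=H$, together with $H^2=5$ and $\chi(\mathcal{O}_{S_5})=5$, Hirzebruch--Riemann--Roch gives
\begin{equation*}
\ch{2}(F)=\chi(F)+\tfrac12\,H\cdot\ch{1}(F)-5r,
\end{equation*}
so that $\xi_H(F)=\tfrac{1}{5r}\chi(F)+\tfrac12\mu_H(F)-1$. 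Thus an upper bound for $\xi_H(F)$ is equivalent to an upper bound for $\chi(F)$, and it suffices to bound $h^0$ and $h^2$ from above.

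Next I would exploit cohomology vanishing. Since $0<\mu_H(F)<1$, the twist $F(-H)$ is slope-semistable of negative slope, hence $H^0(F(-H))=0$; the sequence $0\to F(-H)\to F\to F|_C\to 0$ then yields $h^0(F)\le h^0(F|_C)$. Dually, Serre duality and $\omega_{S_5}=\mathcal{O}(H)$ give $H^2(F)\cong H^0(F^\vee(H))^\vee$, and as $F^\vee$ is slope-semistable of slope $-\mu_H(F)<0$ we have $H^0(F^\vee)=0$, whence $h^2(F)\le h^0\bigl((F|_C)^\vee(H|_C)\bigr)$. Discarding $-h^1(F)\le 0$ leaves
\begin{equation*}
\chi(F)\le h^0(F|_C)+h^0\bigl((F|_C)^\vee(H|_C)\bigr).
\end{equation*}
On $C$ the bundle $F|_C$ has rank $r$ and slope $5\mu_H(F)$, while its Serre-dual twist $(F|_C)^\vee(H|_C)$ has rank $r$ and slope $5\bigl(1-\mu_H(F)\bigr)$; both slopes lie in $(0,5)\subset(0,2g-2)$.

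I would then invoke the refined restriction theorem to guarantee that $F|_C$, and hence its dual twist, is semistable on $C$ for general $C$ (if only the Harder--Narasimhan factors are controlled, I would apply the bound factorwise and reassemble using the shape of the bounding function). Granting a piecewise-linear $\phi$ with $h^0(E)/\rk(E)\le\phi(\nu)$ for every semistable $E$ of slope $\nu\in(0,5)$ on a plane quintic, the two displays combine to
\begin{equation*}
\xi_H(F)\le \frac{\phi\bigl(5\mu_H(F)\bigr)+\phi\bigl(5-5\mu_H(F)\bigr)}{5}+\frac{\mu_H(F)}{2}-1 .
\end{equation*}
The $\nu\mapsto 5-\nu$ symmetry of the right-hand side explains the $\mu\mapsto 1-\mu$ symmetry of the statement (the breakpoints pairing as $\{7/46,39/46\}$ and $\{7/20,13/20\}$ about $\mu=1/2$). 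In the central range the Clifford-index-one estimate $\phi(\nu)=(\nu+1)/2$, characteristic of plane quintics, gives $\phi(5\mu)+\phi(5-5\mu)=7/2$ and hence exactly the middle piece $\tfrac12\mu_H(F)-\tfrac{3}{10}$; the remaining pieces come from the finer behaviour of $\phi$ at small slope.

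The hard part will be establishing the sharp, explicit profile of $\phi$ over the whole slope range, in particular below the breakpoint at $\nu=7/4$ where the generic Clifford-index bound must be genuinely improved. This is precisely where the special projective geometry of plane quintics enters: their low Clifford index and the geometry of the $g^2_5$ and of low-degree pencils force a semistable bundle of small slope to have far fewer sections than the naive Clifford estimate permits, and controlling this for arbitrary rank is the crux. Pinning down the exact piecewise-linear $\phi$ so that its breakpoints match the stated values, and verifying semistability of $F|_C$ for a single general plane section (rather than only for $|mH|$ with $m\gg 0$), are the two delicate points; once $\phi$ is in hand, the final step is the elementary piecewise comparison sketched above.
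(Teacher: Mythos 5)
Your skeleton is the paper's: Riemann--Roch on $S_5$ reduces the claim to bounding $\chi(F)$, the vanishings $\Hom(\mathcal{O},F(-H))=0$ and $\Hom(\mathcal{O},F^\vee)=0$ give $\chi(F)\le h^0(F|_C)+h^0\bigl((F|_C)^\vee(H_C)\bigr)$ for $C\in|H|$ a smooth plane quintic, and Clifford-type bounds on $C$ (the paper's \Cref{CliffordType}, quoted from Feyzbakhsh rather than re-derived) finish the computation; your verification of the middle piece $\tfrac12\mu_H(F)-\tfrac3{10}$ is correct. But the step you set aside as ``delicate'' --- controlling $F|_C$ for a \emph{single} hyperplane section --- is the actual content of the proof, and slope-semistability of $F$ alone cannot deliver it: no effective restriction theorem applies with $m=1$ when $\overline{\Delta}_H(F)$ is unbounded. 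The paper's mechanism has two ingredients your proposal omits entirely. First, a reduction (\Cref{LemmaReductionToStableObjects}, by minimizing $\overline{\Delta}_H$ among violating objects and invoking \Cref{LemmaDiscriminant}) replacing $F$ by an object that is $\nu_{\alpha,0,H}$-tilt-stable for \emph{all} $\alpha>0$ and whose shift is $\nu_{\alpha',1,H}$-tilt-stable for all $\alpha'>1/2$. Second, the refined restriction theorem (\Cref{RestrictionTheorem}): since $\iota_*(F|_C)$ is an extension of $F$ by $F(-H)[1]$ in $\Coh^{0,H}(S_5)$, letting $\alpha\to0$ gives
\[
[\mu^-_{H_C}(F|_C),\mu^+_{H_C}(F|_C)]\subseteq \llbracket \tfrac12+\nu_{0,0,H}(F),\ \tfrac12+\nu_{0,0,H}(F(-H)[1])\rrbracket .
\]
A merely slope-semistable $F$ is tilt-semistable only for $\alpha\gg0$, where these endpoints carry no information, so your premise that $F|_C$ is semistable of slope $5\mu_H(F)$ (or has HN slopes confined near it) is unjustified without this reduction.

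A second, related gap concerns the logical structure and the outer pieces. The endpoints above involve $\nu_{0,0,H}(F)=\xi_H(F)/\mu_H(F)$, i.e.\ the slope range of $F|_C$ depends on the very quantity being bounded; the paper therefore argues by contradiction, using the assumed failure $\xi_H(F)>f_{S_5}(\mu_H(F))$ to force $\mu^\pm_{H_C}$ into the window where \Cref{H0Bound} applies. For $0<\mu_H(F)\le 7/46$ it must moreover switch to bounding $\chi(\mathcal{O},F(H))$ and use \Cref{H0Bound1}, which needs only an upper bound on $\mu^+_{H_C}$ because $\mu^-_{H_C}(F|_C)$ is not controlled in that regime. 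Your closed formula $\xi_H(F)\le \tfrac15\bigl(\phi(5\mu)+\phi(5-5\mu)\bigr)+\tfrac{\mu}2-1$ presupposes exact semistability of the restriction and does not produce the first and last pieces; they come from this change of twist and the factorwise (convex-roof) estimate of \Cref{lem:convexfunction}, not merely from ``finer behaviour of $\phi$ at small slope.''
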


Our proof relies on the notion of tilt-stability. The strategy is to employ a generalized restriction theorem that allows us to reduce to the case of quintic surfaces, where the desired bounds follow from the Riemann--Roch theorem combined with Clifford-type inequalities for plane quintic curves.

This approach parallels the methods of \cite{li_stability_2019,koseki_stability_2022,liu_stability_2022}, which established analogous inequalities for some complete intersection type Calabi--Yau threefolds. The main improvement here lies in a refined restriction theorem (\Cref{RestrictionTheorem}), which weakens the hypothesis and allows restriction to $|H|$ instead of $|2H|$. As a consequence, we may directly use Clifford-type bounds for plane curves, leading to sharper estimates. The method employed here extends to weighted complete intersection hypersurfaces as well, though the computations in each case tend to be considerably more demanding.

Stronger Bogomolov--Gieseker type inequalities on Calabi--Yau threefolds are known to play a crucial role in constructing Bridgeland stability conditions, as demonstrated in \cite{li_stability_2018,li_stability_2019,koseki_stability_2022,liu_stability_2022} and the recent preprint \cite{feyzbakhsh2025stabilityconditionscalabiyauthreefolds}. In fact, Toda's conjecture (\Cref{TodaConj0}) was originally motivated by the search for a Gepner-type stability condition on the quintic threefold. However, as far as the author is aware, the existing inequalities are not yet sufficient for this purpose due to certain technical obstacles. 

The construction of such a Gepner-type stability condition on the quintic remains an intriguing direction for future work.

\section*{Acknowledgments}
The author would like to thank Shengxuan Liu for introducing him to the problem studied in this article. He is deeply grateful to Chunyi Li for his patient and insightful guidance throughout the project. He also thanks Naoki Koseki, Peize Liu and Zhiyu Liu for many helpful comments and suggestions that improved the exposition.

The author is supported by the Warwick Mathematics Institute Centre for Doctoral Training, and gratefully acknowledges funding from the University of Warwick. The author is also partially supported by the Royal Society URF\textbackslash R1\textbackslash 201129 “Stability condition and application in algebraic geometry”.

\section{Preliminary}
In this section, we recall the basic notions of slope stability and tilt-stability on smooth projective varieties, following conventions of \cite{bridgeland_stability_2007,bayer_bridgeland_2013}. 

Throughout, $X$ denotes a smooth projective variety of dimension $n$, where $n = 2$ or $3$. We denote by $\Db(X)\coloneqq \Db(\Coh(X))$ the bounded derived category of coherent sheaves on $X$.

\subsection{Stability condition: notations and conventions}

Let $B\in \mathrm{NS}(X)_{\mathbb{R}}$ be a real divisor class. We denote the \textbf{twisted Chern characters} of an object in $\Db(X)$ by 
\begin{align*}
    &\ch{0}^B(E) = \ch{0}(E) = \rk(E);   \\
    &\ch{1}^B(E) = \ch{1}(E) - B\ch{0}(E); \\
    &\ch{2}^B(E) = \ch{2}(E) -B\ch{1}(E) + \tfrac{1}{2}B^2\ch{0}(E);\\
    &\ch{3}^B(E) = \ch{3}(E)- B\ch{2}(E) + \tfrac{1}{2}B^2\ch{1}(E) - \tfrac{1}{6}B^3\ch{0}(E).
\end{align*}

\begin{definition}[\textbf{Slope stability}]
    Let $H$ be an ample divisor on $X$. The slope of a coherent sheaf $F$ with respect to $H$ is 
    \begin{equation*}
        \mu_H(F) \coloneqq \begin{cases}
            \frac{H^{n - 1} \ch{1}(F)}{H^n\ch{0}(F)}, & \text{if } \ch{0}(F)\neq 0, \\[1ex]
            +\infty, & \text{if } \ch{0}(F) = 0.
        \end{cases}
    \end{equation*}
    A coherent sheaf $F$ is called \textbf{$H$-slope-(semi)stable} if for every nontrivial subsehaf $E\subset F$,
    \begin{equation*}
        \mu_H(E) < (\leq ) \mu_H(F/E)
    \end{equation*}
\end{definition}

\begin{proposition}[\textbf{Harder--Narasimhan filtration}] For any coherent sheaf $F$ on $X$, there exists a unique filtration
\begin{equation*}
    0 = F_0 \subset F_1 \subset \cdots\subset F_n = F,
\end{equation*}
such that each quotient $F_i/F_{i - 1}$ is $H$-slope-semistable and 
\begin{equation*}
    \mu_H(F_1/F_0) > \mu_H(F_2/F_1)>\cdots > \mu_H(F_n/F_{n-1}).
\end{equation*}
We write
\begin{equation*}
    \mu^+_H(F)\coloneqq\mu_H(F_1/F_0), \qquad\mu^-_H(F) \coloneqq\mu_H(F_n/F_{n - 1})
\end{equation*}
for the maximal and minimal slope of the factors.
\end{proposition}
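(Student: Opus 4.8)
The plan is to build the filtration one term at a time via the \emph{maximal destabilizing subsheaf} and to deduce uniqueness from a $\Hom$-vanishing principle. The essential input is a boundedness statement: I would first show that $\mu^+_H(F)\coloneqq\sup\{\mu_H(E) : 0\neq E\subseteq F\}$ is finite when $F$ is torsion-free. Since torsion subsheaves carry slope $+\infty$, the maximal torsion subsheaf forms the top piece of the filtration and one reduces at once to the torsion-free case. There, embedding $F$ into its reflexive hull and bounding $H^{n-1}\ch{1}(E)$ over subsheaves $E$ of each fixed rank, I would invoke Grothendieck's boundedness lemma (equivalently, finiteness of the relevant Quot scheme on $X$) to bound the slopes from above. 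I expect this to be the genuine obstacle, as it is the only step requiring projective-geometric input rather than formal manipulation of slopes.

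Granting boundedness, the values $\mu_H(E)$ for subsheaves of rank at most $\rk(F)$ lie in a fixed discrete subset of $\mathbb{Q}$ (the denominators being bounded in terms of $\rk(F)$ and $H^n$), so, being bounded above, they attain a maximum, equal to $\mu^+_H(F)$. Any $E\subseteq F$ realizing this maximal slope is automatically $H$-slope-semistable, since a subsheaf of $E$ is a subsheaf of $F$ and hence has slope at most $\mu^+_H(F)=\mu_H(E)$. To obtain a \emph{unique maximal} such subsheaf, I would use that for semistable $E_1,E_2\subseteq F$ of slope $\mu^+_H(F)$ the sum $E_1+E_2$ is a quotient of the semistable sheaf $E_1\oplus E_2$ and a subsheaf of $F$, forcing $\mu_H(E_1+E_2)=\mu^+_H(F)$; the resulting ascending chain stabilizes by the Noetherian property of $\Coh(X)$, producing the maximal destabilizing subsheaf $F_1$.

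Next I would verify the key inequality $\mu^+_H(F/F_1)<\mu_H(F_1)$: a subsheaf $\bar{G}\subseteq F/F_1$ of slope at least $\mu_H(F_1)$ pulls back to $G$ with $F_1\subsetneq G\subseteq F$ and, by the seesaw applied to $0\to F_1\to G\to \bar{G}\to 0$, with $\mu_H(G)\geq \mu^+_H(F)$, contradicting the maximality of $F_1$ among subsheaves of slope $\mu^+_H(F)$. Applying the construction to $F/F_1$ yields $F_2/F_1$, and iterating produces the filtration; the process terminates after finitely many steps, since the rank of the successive quotient strictly decreases once the (single) torsion step is past, while the identities $\mu_H(F_i/F_{i-1})=\mu^+_H(F/F_{i-1})$ together with the key inequality give the strictly decreasing slopes.

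For uniqueness I would invoke the standard vanishing $\Hom(A,B)=0$ for $H$-slope-semistable $A,B$ with $\mu_H(A)>\mu_H(B)$, which holds because a nonzero map would have image simultaneously a quotient of $A$ and a subsheaf of $B$. Given a second such filtration with first term $F_1'$, one first notes that $\mu_H(F_1')=\mu^+_H(F)$ (a subsheaf of maximal slope maps to zero in $F/F_1'$ by the vanishing, hence lies in $F_1'$), so all factors of $F/F_1'$ have slope strictly below $\mu^+_H(F)=\mu_H(F_1)$. Consequently the composite $F_1\hookrightarrow F\twoheadrightarrow F/F_1'$ vanishes and $F_1\subseteq F_1'$; the symmetric argument gives $F_1=F_1'$, and induction on $F/F_1$ completes the proof.
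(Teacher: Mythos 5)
The paper records this proposition as a classical fact (Harder--Narasimhan) and supplies no proof of its own, so there is nothing internal to compare against; your argument is the standard construction and is correct: Grothendieck boundedness to see that slopes of subsheaves are bounded above, the sum-of-subsheaves/Noetherian argument for the maximal destabilizing subsheaf, the seesaw inequality for $\mu^+_H(F/F_1)<\mu_H(F_1)$, and the vanishing $\Hom(A,B)=0$ for semistable $A,B$ with $\mu_H(A)>\mu_H(B)$ for uniqueness. The only point worth flagging is that your discreteness step (slopes lying in $\tfrac{1}{N}\mathbb{Z}$ for bounded $N$) uses that $H$ is an integral ample class, which holds throughout the paper; for a real ample class one would instead extract attainment of the supremum from boundedness of the family of saturated destabilizing subsheaves.
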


For any real number $\beta\in\mathbb{R}$, there exists a \textit{torsion pair} $(\mathcal{T}_{\beta,H}, \mathcal{F}_{\beta,H})$ in $\Coh(X)$ as follows:
\begin{equation*}
    \mathcal{T}_{\beta,H} = \{E\in\Coh(X)\mid \mu^-_H(E) > \beta \}, \qquad \mathcal{F}_{\beta,H} = \{E\in\Coh(X)\mid \mu^+_H(E) \leq \beta \}.
\end{equation*}

\begin{definition}[\textbf{Tilted heart}]
    We define the tilted heart
    \begin{equation*}
        \Coh^{\beta, H}(X) : = \langle\mathcal{T}_{\beta,H}, \mathcal{F}_{\beta,H}[1]\rangle.
    \end{equation*}
    to be the extension-closure of the above torsion pair in $\Db(X)$.
\end{definition}
By standard tilting theory in \cite{happel_tilting_1996}, $\Coh^{\beta,H}(X)$ forms the heart of a bounded $t$-structure in $\Db(X)$, hence an abelian category. 

\begin{definition}[\textbf{Tilt slope and tilt-stability}] For $E\in \Coh^{\beta,H}(X)$, $\alpha\in\mathbb{R}$, the  $\nu_{\alpha,\beta,H}$-\textbf{tilt slope} of $E$ is
\begin{equation*}
    \nu_{\alpha,\beta,H}(E) \coloneqq \begin{cases}
        \frac{H^{n-2}\ch{2}(E) - \alpha H^n\ch{0}(E)}{H^{n- 1} \ch{1}^{\beta H}(E)} &\text{if } H^{n-1}\ch{1}^{\beta H} \neq 0, \\[1ex]
        +\infty &\text{if } H^{n-1}\ch{1}^{\beta H} = 0.
    \end{cases}
\end{equation*}
An object $E\in\Coh^{\beta,H}(X)$ is called $\nu_{\alpha,\beta,H}$-tilt-(semi)stable if for any nontrivial subobject $F\subset E$ in $\Coh^{\beta,H}(X)$,
\begin{equation*}
    \nu_{\alpha,\beta,H}(F) < (\leq ) \nu_{\alpha,\beta,H}(E/F).
\end{equation*}
An object $E\in \Db(X)$ is called $\nu_{\alpha,\beta,H}$-tilt-(semi)stable if $E[n]\in\Coh^{\beta,H}(X)$ is $\nu_{\alpha,\beta,H}$-tilt-(semi)stable for some integer $n$.
\end{definition}

As in slope stability, when $\alpha > \frac{1}{2}\beta^2$, the $\nu_{\alpha,\beta,H}$-stability admits the Harder--Narasimhan filtration property. For an object $E\in \Coh^{\beta,H}(X)$, we denote by $\nu^+_{\alpha,\beta,H}(E)$ and $\nu^-_{\alpha,\beta,H}(E)$ the maximum and minimum slopes of its Harder--Narasimhan filtration factors.

\begin{definition}[\textbf{$H$-discriminant}]
    For $E\in \Db(X)$, define
    \begin{equation*}
        \overline{\Delta}_H(E) : = (H^{n-1}\ch{1}(E))^2 - 2H^n\ch{0}(E)\cdot H^{n-2}\ch{2}(E).
    \end{equation*}
\end{definition}

The classical BG inequality extends to tilt-semistable objects.
\begin{theorem}[{\cite[Theorem 7.3.1]{bayer_bridgeland_2013}}, {\cite[Proposition 2.21]{piyaratne_moduli_2019}}]
    Let $X$ be a smooth projective variety, and $H\in \mathrm{NS}(X)_{\mathbb{R}}$ an ample class. If $E$ is $\nu_{\alpha,\beta,H}$-tilt-semistable for some $\alpha > \frac{1}{2}\beta^2$, then $\overline{\Delta}_H(E) \geq 0$.
\end{theorem}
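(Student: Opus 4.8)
The plan is to deduce the inequality from the classical Bogomolov--Gieseker inequality for slope-semistable sheaves, combined with the Hodge Index Theorem, and then to propagate it to all tilt-semistable objects by a large-volume-limit and wall-crossing argument. After shifting we may assume $E\in\Coh^{\beta,H}(X)$. If $\ch{0}(E)=0$ then $\overline{\Delta}_H(E)=(H^{n-1}\ch{1}(E))^2\geq 0$ and there is nothing to prove, so I assume $\ch{0}(E)\neq 0$ throughout.

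The concrete input is that the inequality already holds for an ordinary slope-semistable sheaf. For a torsion-free $H$-slope-semistable sheaf $F$, the Hodge Index Theorem for the ample class $H$ gives $(H^{n-1}\ch{1}(F))^2\geq H^n\cdot(H^{n-2}\ch{1}(F)^2)$, whence
\[
\overline{\Delta}_H(F)\geq H^n\bigl(H^{n-2}\ch{1}(F)^2 - 2\ch{0}(F)\,H^{n-2}\ch{2}(F)\bigr) = H^n\cdot\bigl(H^{n-2}\Delta(F)\bigr)\geq 0,
\]
the final inequality being the classical Bogomolov--Gieseker Theorem. As $\overline{\Delta}_H$ is invariant under shifts, the same bound holds for $F[1]$. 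Next I would analyze the large-volume limit: fixing $\beta$ and letting $\alpha\to+\infty$, I would show that a $\nu_{\alpha,\beta,H}$-tilt-semistable object is, up to shift, a slope-semistable sheaf---if $\mathcal{H}^{-1}(E)=0$ then $E$ is a slope-semistable sheaf, while if $\mathcal{H}^{-1}(E)\neq 0$ then $E\cong F[1]$ for a slope-semistable sheaf $F$ with $\mu^+_H(F)\leq\beta$. In either case the computation above yields $\overline{\Delta}_H(E)\geq 0$.

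To reach the given point $(\alpha_0,\beta_0)$ I would descend from the large-volume limit. Since the tilted heart $\Coh^{\beta_0,H}(X)$ does not depend on $\alpha$, the object $E$ lies in it for every $\alpha$; for $\alpha\gg 0$ its Harder--Narasimhan factors with respect to $\nu_{\alpha,\beta_0,H}$ are large-volume-semistable, hence slope-semistable sheaves up to shift, and therefore satisfy the inequality by the previous step. As $\alpha$ decreases to $\alpha_0$, the Harder--Narasimhan factors change only when $(\alpha,\beta_0)$ crosses one of the locally finitely many walls, where factors of equal tilt-slope recombine through extensions. Since $E$ is $\nu_{\alpha_0,\beta_0,H}$-semistable, all factors have recombined into $E$, and at each extension $0\to A\to C\to B\to 0$ of equal-slope objects the superadditivity of the discriminant gives $\overline{\Delta}_H(C)\geq\overline{\Delta}_H(A)+\overline{\Delta}_H(B)$. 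Propagating this from the large-volume factors down to $E$ yields $\overline{\Delta}_H(E)\geq 0$.

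The main obstacle is the wall-crossing step, which hides two genuinely nontrivial points. First is the large-volume structure theorem together with local finiteness of walls for a fixed numerical class; establishing these requires controlling torsion and comparing with twisted Gieseker stability, and must be arranged so as to avoid circularity with the very inequality being proved. Second is the superadditivity lemma $\overline{\Delta}_H(\sum_i v_i)\geq\sum_i\overline{\Delta}_H(v_i)$ for classes $v_i$ of equal tilt-slope, which is where the indefinite (signature $(2,1)$) nature of the form $\overline{\Delta}_H$ is used: one must verify that, on the plane of classes sharing a fixed tilt-slope, the cross terms
\[
2(H^{n-1}\ch{1}(v_i))(H^{n-1}\ch{1}(v_j)) - 2H^n\ch{0}(v_i)\,H^{n-2}\ch{2}(v_j) - 2H^n\ch{0}(v_j)\,H^{n-2}\ch{2}(v_i)
\]
are nonnegative. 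This is a Cauchy--Schwarz-type estimate that genuinely fails without the equal-slope hypothesis, so the alignment of the slopes along a wall is essential and is the technical heart of the argument.
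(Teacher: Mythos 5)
The paper gives no proof of this statement---it is imported verbatim from the cited references---and your outline reproduces exactly the argument used there: the Hodge index theorem combined with the classical Bogomolov--Gieseker inequality gives $\overline{\Delta}_H\geq 0$ for slope-semistable sheaves and hence at the large-volume limit, and the bound is then transported to arbitrary $(\beta,\alpha)$ by local finiteness of walls together with superadditivity of $\overline{\Delta}_H$ on classes of equal tilt-slope. The two difficulties you flag (the large-volume classification, where your description of the $\mathcal{H}^{-1}(E)\neq 0$ case is slightly too coarse but harmlessly so since torsion pieces have $\ch{0}=0$ and hence nonnegative discriminant, and the equal-slope positivity of the cross terms) are precisely the content of the lemmas in the cited sources, so the sketch is sound and follows the same route as the paper's reference.
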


\subsection{Useful Lemmas}
Let $X$ be a smooth projective variety of dimension $n$ and $H\in \mathrm{NS}(X)_{\mathbb{R}}$ be a real ample divisor class. For $E\in \Db(X)$, define
\begin{equation*}
    \overline{v}_H(E) \coloneqq (H^n\ch{0}(E),H^{n-1}\ch{1}(E),H^{n-2}\ch{2}(E)),
\end{equation*}
and, when  $H^n\ch{0}(E) \neq 0$,
\begin{equation*}
    \text{and } p_H(E) \coloneqq \left(\frac{H^{n-1}\ch{1}(E)}{H^n\ch{0}(E)}, \frac{H^{n-2}\ch{2}(E)}{H^n\ch{0}(E)}\right) = (\mu_H(E),\xi_H(E)). 
\end{equation*}
\begin{remark}
    For a smooth hypersurface $Y\in |mH|$ for $m > 0$, and for $0\leq i\leq n - 1$, we have
    \begin{equation*}
        mH^{n-i}\ch{i}(F) = H^{n-i-1}_Y\ch{i}(F|_Y).
    \end{equation*}
    Hence, $\mu_H$ and $\xi_H$ are invariant under restriction to hypersurfaces. In particular, we have 
    \begin{equation*}
        p_H(F) = p_{H_Y} (F|_Y).
    \end{equation*}
    
\end{remark}

Throughout, we fix real parameters $\alpha,\beta\in\mathbb{R}$ with $\alpha > \frac{1}{2}\beta^2$. In the $(x,y)$-plane where $x = \frac{H^{n-1}\ch{1}(\cdot)}{H^n\ch{0}(\cdot)}$ and $y = \frac{H^{n-2}\ch{2}(\cdot)}{H^n\ch{0}(\cdot)}$, every point $(\beta,\alpha)$ with $\alpha\geq \frac{1}{2}\beta^2$ corresponds to a tilt-stability condition. By BG inequality, every stable character $p_H(E)$ lies on or under the parabola $y = \frac{1}{2}x^2$.

\begin{lemma}\label{StructureOfWalls}
    Let $E\in \Coh^{\beta_0,H}(X)$ be $\nu_{\alpha_0,\beta_0,H}$-tilt-stable  for some $\alpha_0>\frac{1}{2}\beta^2$, then:
    \begin{enumerate}[(a)]
        \item (Openness) There exists an open set of neighborhood $U$ of $(\beta_0,\alpha_0)$ such that for any $(\beta,\alpha)\in U$, the object $E$ is $\nu_{\alpha,\beta,H}$-tilt-stable.
        \item (Bertram's nested wall theorem) $E$ is $\nu_{\alpha,\beta,H}$-tilt-stable for all points $(\beta,\alpha)$ on the line through  $(\beta_0,\alpha_0)$ and $p_H(E)$ satisfying $\alpha > \frac{1}{2}\beta^2$. The statement also holds for semistable case.
        \item[(b$'$)] Let $F$ be an object in $\Coh^{\beta_0,H}(X)$ such that $p_H(F)$ is on the line through the points $(\beta_0,\alpha_0)$ and $p_H(E)$, then $\nu_{\alpha_0,\beta_0,H}(E) = \nu_{\alpha_0,\beta_0,H}(F)$.
        \item(Destabilizing walls) The set $\{(\beta,\alpha)\in\mathbb{R}^2 | \alpha > \frac{1}{2}\beta^2, E \text{ is strictly } \nu_{\alpha,\beta,H}\text{-tilt-semistable} \}$ is empty or a union of line segments and rays.
    \end{enumerate}
\end{lemma}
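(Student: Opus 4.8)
The plan is to reduce every assertion to a single elementary geometric fact about the tilt slope. Writing $\overline{v}_H(E) = (R,C,D)$ for $(H^n\ch{0}(E), H^{n-1}\ch{1}(E), H^{n-2}\ch{2}(E))$ and dividing the numerator and denominator of the tilt slope by $H^n\ch{0}(E) = R \neq 0$, I would first record the identity
$$\nu_{\alpha,\beta,H}(E) = \frac{H^{n-2}\ch{2}(E) - \alpha H^n\ch{0}(E)}{H^{n-1}\ch{1}^{\beta H}(E)} = \frac{\xi_H(E) - \alpha}{\mu_H(E) - \beta},$$
so that $\nu_{\alpha,\beta,H}(E)$ is exactly the slope of the segment joining the stability point $(\beta,\alpha)$ to the character point $p_H(E) = (\mu_H(E),\xi_H(E))$ in the common $(x,y)$-plane (a vertical segment, i.e. slope $+\infty$, when $\mu_H(E) = \beta$). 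The Bogomolov--Gieseker inequality for tilt-semistable objects places $p_H(E)$ on or below the parabola $y = \tfrac12 x^2$, whereas every condition $(\beta,\alpha)$ with $\alpha > \tfrac12\beta^2$ lies strictly above it; this separation drives the whole argument.

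Second, I would extract the linear structure of numerical walls. For a second class with $\overline{v}_H(F) = (r,c,d)$, the equality $\nu_{\alpha,\beta,H}(F) = \nu_{\alpha,\beta,H}(E)$ reads $(D-\alpha R)(c - \beta r) = (d-\alpha r)(C - \beta R)$; upon expanding, the coefficient of $\alpha\beta$ equals $Rr$ on both sides and cancels, leaving
$$(Dc - dC) - \beta(Dr - dR) - \alpha(Rc - rC) = 0,$$
which is affine-linear in $(\beta,\alpha)$. Equivalently — and this is how I would phrase it — the locus is precisely the set of $(\beta,\alpha)$ collinear with the two fixed points $p_H(E)$ and $p_H(F)$, so whenever $p_H(E) \neq p_H(F)$ the numerical wall is the straight line through $p_H(E)$ and $p_H(F)$, and in particular it passes through $p_H(E)$. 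Part (b$'$) is now immediate: if $p_H(F)$ lies on the line joining $(\beta_0,\alpha_0)$ to $p_H(E)$, then the segments from $(\beta_0,\alpha_0)$ to $p_H(E)$ and to $p_H(F)$ share a slope, i.e. $\nu_{\alpha_0,\beta_0,H}(E) = \nu_{\alpha_0,\beta_0,H}(F)$.

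Third, I would deduce Bertram's nested wall theorem (b) and the shape statement (c) from this picture. Let $\ell$ be the line through $(\beta_0,\alpha_0)$ and $p_H(E)$. Since every nontrivial wall of $E$ is a line through $p_H(E)$, two distinct such lines meet only at $p_H(E)$, which is not in the open region $\{\alpha > \tfrac12\beta^2\}$; hence as $(\beta,\alpha)$ traverses the connected piece of $\ell$ lying above the parabola, no wall is crossed and tilt-(semi)stability is constant there, giving (b). For (c), each destabilizing locus is a line by the computation above, and a line meets the convex region $\{\alpha > \tfrac12\beta^2\}$ in a single segment (non-vertical case, where the relevant quadratic in $\beta$ is negative on a bounded, possibly empty, interval) or a ray (vertical case); the destabilizing set is the union of these, as claimed.

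Finally, I would address openness (a), which is where the genuine content lies. The shape of an individual wall is formal, but to know that a tilt-stable point has a wall-free neighborhood I must prove local finiteness of walls. The hard part will be the boundedness input: for a destabilizing subobject $F \hookrightarrow E$ in $\Coh^{\beta,H}(X)$ on a wall, both $F$ and $E/F$ are tilt-semistable there, so the Bogomolov--Gieseker inequality gives $\overline{\Delta}_H(F), \overline{\Delta}_H(E/F) \geq 0$, and combining this with the additivity of $\overline{\Delta}_H$ along the short exact sequence yields $0 \leq \overline{\Delta}_H(F) \leq \overline{\Delta}_H(E)$. This discriminant bound confines the numerical classes $\overline{v}_H(F)$ that can produce a wall meeting any fixed compact subset of the $(\beta,\alpha)$-plane to a finite set, so the walls are locally finite. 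A tilt-stable object at $(\beta_0,\alpha_0)$ then lies in an open chamber cut out by finitely many wall-lines, and on that chamber stability does not change by the nested-wall argument, which establishes (a). I would cite the standard treatments, e.g. \cite{bayer_bridgeland_2013}, for the boundedness and discriminant estimates rather than reprove them.
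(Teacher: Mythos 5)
Your proposal is correct in substance, but it is worth noting that the paper does not actually prove this lemma at all: its ``proof'' is a pointer to \cite[Corollary 3.3.3]{bayer_bridgeland_2013}, \cite[Appendix B]{bayer_space_2016} and \cite[Lemma 2.9]{li_stability_2019}. What you have written is essentially a reconstruction of the standard argument contained in those references, built on the correct key identity $\nu_{\alpha,\beta,H}(E)=\frac{\xi_H(E)-\alpha}{\mu_H(E)-\beta}$, the linearity of numerical walls, and the fact that the BG inequality separates $p_H(E)$ from the region $\{\alpha>\tfrac12\beta^2\}$; parts (b$'$), (b) and the ``contained in a union of segments and rays'' half of (c) follow cleanly from this, and you correctly identify that the genuine content of (a) is local finiteness of walls via the discriminant bound $0\le\overline{\Delta}_H(F)\le\overline{\Delta}_H(E)$, which you reasonably outsource. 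Two points where your sketch is thinner than a complete proof: first, your arguments for (b) and (c) quietly use that (semi)stability can only change across a wall, i.e.\ the chamber structure coming from local finiteness, so logically (a) should come first rather than last; second, for (c) as stated you only show the strictly-semistable locus is \emph{contained} in a union of lines intersected with $\{\alpha>\tfrac12\beta^2\}$ --- to get that it \emph{is} a union of whole segments and rays one must propagate a destabilizing subobject $F\subset E$ along the entire numerical wall, e.g.\ by applying the nested-wall statement (b) to $F$ and $E/F$ (whose numerical walls coincide with that of the pair) and checking they stay in the relevant hearts. Neither issue is a wrong turn, but a referee would expect them addressed or explicitly delegated to the cited sources, exactly as the paper does.
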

\begin{proof}
    See \cite[Corollary 3.3.3]{bayer_bridgeland_2013}, \cite[Appendix B]{bayer_space_2016}, also \cite[Lemma 2.9]{li_stability_2019}.
\end{proof}

The following lemma from \cite{BBFHMRS24} will be crucial in the technique of deforming tilt-stability. 
\begin{lemma}[{\cite[Proposition 4.8]{BBFHMRS24}}]\label{LemmaDiscriminant}
    Let $E$ be a strictly $\nu_{\alpha,\beta,H}$-tilt-semistable object with $\nu_{\alpha,\beta,H}(E)\neq +\infty$. Then for any Jordan--H\"older factor $E_i$ of $E$, we have
    \begin{equation*}
        \overline{\Delta}_H(E_i) \leq \overline{\Delta}_H(E).
    \end{equation*}
    The equality holds only when $\overline{v}_H(E_i)$ is proportional to $\overline{v}_H(E)$ and $\overline{\Delta}_H(E) = \overline{\Delta}_H(E_i) = 0$.
\end{lemma}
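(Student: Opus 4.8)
The plan is to deduce the inequality from a positivity property of the symmetric bilinear form that polarises $\overline{\Delta}_H$, restricted to the locus of objects sharing the tilt slope of $E$. Write $\overline{v}_H(\cdot)=(r,c,d)$ for $(H^n\ch{0},H^{n-1}\ch{1},H^{n-2}\ch{2})$, so that $\overline{\Delta}_H=c^2-2rd=B(\overline{v}_H,\overline{v}_H)$ for the bilinear form $B\bigl((r_1,c_1,d_1),(r_2,c_2,d_2)\bigr)=c_1c_2-r_1d_2-r_2d_1$. Since $E$ is strictly semistable, its Jordan--H\"older factors $E_1,\dots,E_k$ with $k\ge 2$ are $\nu_{\alpha,\beta,H}$-tilt-stable of one common finite slope $\nu:=\nu_{\alpha,\beta,H}(E)$, and $\overline{v}_H(E)=\sum_i\overline{v}_H(E_i)$ by additivity of the Chern character. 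Expanding the quadratic form gives
\[
  \overline{\Delta}_H(E)=\sum_i\overline{\Delta}_H(E_i)+2\sum_{i<j}B\bigl(\overline{v}_H(E_i),\overline{v}_H(E_j)\bigr).
\]
The Bogomolov--Gieseker inequality for tilt-semistable objects yields $\overline{\Delta}_H(E_i)\ge 0$ for every factor, so once I show that every cross term $B(\overline{v}_H(E_i),\overline{v}_H(E_j))$ is nonnegative, the inequality $\overline{\Delta}_H(E_i)\le\overline{\Delta}_H(E)$ follows immediately for each $i$.

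The key step is to exploit that all factors record points on the plane $P=\{d=(\alpha-\nu\beta)r+\nu c\}$ cut out by the equal-slope condition $d-\alpha r=\nu(c-\beta r)$. Restricting $\overline{\Delta}_H$ to $P$ produces the rank-two quadratic form $q(r,c)=c^2-2\nu rc-2(\alpha-\nu\beta)r^2$, whose Gram matrix $M$ has determinant $-\bigl(2(\alpha-\nu\beta)+\nu^2\bigr)$. The standing hypothesis $\alpha>\tfrac12\beta^2$ gives
\[
  2(\alpha-\nu\beta)+\nu^2>\beta^2-2\nu\beta+\nu^2=(\beta-\nu)^2\ge 0,
\]
so $q$ has Lorentzian signature $(1,1)$. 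The Gram--determinant identity $q(u)q(v)-B(u,v)^2=\det(M)\,\bigl(\det[u\,|\,v]\bigr)^2$ (with $[u\,|\,v]$ the matrix of column vectors $u,v$ in the $(r,c)$-coordinates) then produces, since $\det M<0$, the reverse Cauchy--Schwarz bound $B(u,v)^2\ge q(u)q(v)\ge 0$ for any two factors.

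It remains to fix the sign of $B$, and this is where the hypothesis $\nu\ne+\infty$ enters. Each factor lies in the tilted heart, so $c_i-\beta r_i=H^{n-1}\ch{1}^{\beta H}(E_i)>0$, strictly, as the slope is finite. Because $q(1,\beta)=\beta^2-2\alpha<0$, the line $\{c=\beta r\}$ is spacelike and meets the closed positive cone $\{q\ge 0\}$ only at the origin; hence the functional $c-\beta r$ is nonvanishing on each of the two components of $\{q\ge 0\}$ and takes opposite signs on them. The condition $c_i-\beta r_i>0$ therefore places all factors in a single component of the positive cone, and for two vectors in a common component of the positive cone of a Lorentzian form the polarisation is nonnegative. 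This gives $B(\overline{v}_H(E_i),\overline{v}_H(E_j))\ge 0$ and completes the inequality.

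For the equality case, $\overline{\Delta}_H(E)=\overline{\Delta}_H(E_{i_0})$ forces every $\overline{\Delta}_H(E_j)$ with $j\ne i_0$ and every cross term to vanish. Since a strictly timelike vector pairs strictly positively under $B$ with any nonzero lightlike vector of its cone, the vanishing of the cross terms forces every factor to be null and pairwise proportional, whence $\overline{\Delta}_H(E)=\overline{\Delta}_H(E_i)=0$ and $\overline{v}_H(E_i)$ is proportional to $\overline{v}_H(E)$. I expect the main obstacle to be precisely this sign-and-cone bookkeeping: reverse Cauchy--Schwarz only controls $B$ up to sign, and pinning the sign rigorously---while allowing Jordan--H\"older factors of rank zero, for which $r_i=0$ but $c_i>0$---requires the Lorentzian separation argument above rather than any normalisation by rank.
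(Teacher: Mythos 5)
Your argument is correct: the decomposition $\overline{\Delta}_H(E)=\sum_i\overline{\Delta}_H(E_i)+2\sum_{i<j}B(v_i,v_j)$, the observation that all Jordan--H\"older factors lie on the equal-slope plane where $\overline{\Delta}_H$ restricts to a rank-two form of signature $(1,1)$ (using $\alpha>\tfrac12\beta^2$), the placement of all factors in one component of the positive cone via $H^{n-1}\ch{1}^{\beta H}>0$, and the resulting nonnegativity of the cross terms together with the null/proportionality analysis in the equality case all check out, including the rank-zero factors. The paper itself gives no proof and simply cites \cite{BBFHMRS24}; your argument is essentially the standard quadratic-form/support-property proof underlying that reference, so there is nothing to object to.
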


\begin{remark}\label{RemarkOnDiscriminats}
    If $E\in \Db(X)$ with $\mu_H(E) \in (0,1)$ is $\nu_{\alpha,0,H}$-tilt-stable and strictly $\nu_{\alpha_0,0,H}$-tilt-semistable for some $\alpha,\alpha_0 > 0$, then $\overline{\Delta}_H(E) > 0$. 

    Indeed, if $\overline{\Delta}_H(E) = 0$, all Jordan--H\"older factors of $E$ at $(0,\alpha_0)$ share the same $p_H$ as $E$, then these objects will destabilize $E$ everywhere, which contradicts our assumption.
\end{remark}

We will use the following relation between slope stability and tilt-stability:
\begin{lemma}[{\cite[Lemma 2.7]{bayer_space_2016}}]\label{LargeVolumeLimit}
    If $E\in\Coh(X)$ is an $H$-slope-stable torsion-free sheaf and $H^{n-1}\ch{1}^\beta(E) > 0$, then $E\in \Coh^{\beta,H}(X)$ and it is $\nu_{\alpha,\beta,H}$-tilt-stable for $\alpha\gg 0$.
\end{lemma}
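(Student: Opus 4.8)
The plan is to place $E$ in the tilted heart and then rule out destabilizing subobjects once $\alpha$ is large, the essential difficulty being a bound uniform over all potential destabilizers. Membership is immediate: an $H$-slope-stable sheaf is slope-semistable, so $\mu^-_H(E)=\mu_H(E)$, while the hypothesis $H^{n-1}\ch{1}^{\beta H}(E)>0$ reads $H^n\rk(E)\,(\mu_H(E)-\beta)>0$ and hence $\mu_H(E)>\beta$ as $\rk(E)>0$. Therefore $\mu^-_H(E)>\beta$, i.e.\ $E\in\mathcal{T}_{\beta,H}\subset\Coh^{\beta,H}(X)$.

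For tilt-stability I would argue by contradiction using the maximal destabilizer. Abbreviate $\overline{v}_H(\,\cdot\,)=(R,C,D)$ and $\rho(\,\cdot\,)\coloneqq H^{n-1}\ch{1}^{\beta H}(\,\cdot\,)=C-\beta R$, recalling that $\rho\geq 0$ on $\Coh^{\beta,H}(X)$. If $E$ were $\nu_{\alpha,\beta,H}$-unstable, let $F\hookrightarrow E$ be its first Harder--Narasimhan factor, so that $F$ is $\nu_{\alpha,\beta,H}$-semistable with $\nu_{\alpha,\beta,H}(F)>\nu_{\alpha,\beta,H}(E)$. Taking the long exact cohomology sequence (for the standard heart $\Coh(X)$) of the triangle $F\to E\to E/F$ and using that $E$ is a sheaf shows $\mathcal{H}^{-1}(F)=0$, so $F$ is itself a sheaf in $\mathcal{T}_{\beta,H}$, and produces a four-term exact sequence of sheaves
\[
0\to G\to F\to E\to K\to 0,\qquad G\coloneqq\mathcal{H}^{-1}(E/F)\in\mathcal{F}_{\beta,H},\quad K\coloneqq\mathcal{H}^0(E/F)\in\mathcal{T}_{\beta,H}.
\]
Set $F'\coloneqq\operatorname{im}(F\to E)\subseteq E$. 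Now $G$ is torsion-free with $\mu_H(G)\leq\beta<\mu_H(E)$, and by slope-stability of $E$ the subsheaf $F'$ has $\mu_H(F')<\mu_H(E)$ unless $F'=E$; expressing $\mu_H(F)$ as the weighted average of $\mu_H(G)$ and $\mu_H(F')$ then forces $\mu_H(F)<\mu_H(E)$ for every proper subobject (the degenerate cases $F'=0$ and $G=0,\ F'=E$ give $F=0$ or $F=E$). The same bookkeeping excludes subobjects with $\rho(F)=0$.

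Granting $\mu_H(F)<\mu_H(E)$, I would compare tilt-slopes directly. For $\rho(F),\rho(E)>0$,
\[
\nu_{\alpha,\beta,H}(F)-\nu_{\alpha,\beta,H}(E)=\frac{\bigl(D_F\rho_E-D_E\rho_F\bigr)-\alpha\bigl(R_F\rho_E-R_E\rho_F\bigr)}{\rho_F\rho_E},
\]
and a one-line computation gives $R_F\rho_E-R_E\rho_F=R_ER_F\bigl(\mu_H(E)-\mu_H(F)\bigr)>0$. Hence $F$ can violate stability only for $\alpha\leq\alpha^{*}(F)\coloneqq(D_F\rho_E-D_E\rho_F)/(R_F\rho_E-R_E\rho_F)$.

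The remaining, and principal, obstacle is to bound $\alpha^{*}(F)$ uniformly over all destabilizers, whose rank $R_F$ is a priori unbounded. To handle it I would use the Bogomolov inequality for the tilt-semistable object $F$, namely $\overline{\Delta}_H(F)=C_F^2-2R_FD_F\geq 0$, which bounds $D_F\leq C_F^2/(2R_F)$ from above. Together with $0<\rho(F)\leq\rho(E)$ (from $\rho(F)+\rho(E/F)=\rho(E)$), with $\rk(F)\geq 1$, and with the discreteness of the image of $\overline{v}_H$ in a fixed lattice (giving a uniform positive lower bound for $R_F\rho_E-R_E\rho_F$), a short estimate shows $\alpha^{*}(F)$ remains bounded, in fact tending to the parabola value $\tfrac{1}{2}\beta^2$ as $\rk(F)\to\infty$; hence $\sup_F\alpha^{*}(F)<\infty$. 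Choosing $\alpha_0$ above this supremum yields $\nu_{\alpha,\beta,H}$-semistability for all $\alpha>\alpha_0$, and repeating the argument with strict inequalities—using \Cref{LemmaDiscriminant} to rule out the equality locus—gives tilt-stability for $\alpha\gg 0$.
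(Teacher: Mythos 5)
The paper offers no proof of \Cref{LargeVolumeLimit} at all—it is quoted directly from Bayer--Macr\`i—so there is no internal argument to compare against; your write-up is essentially the standard large-volume-limit proof and is sound in outline. The correct ingredients are all present: membership in the heart from $\mu_H(E)>\beta$; the observation that any subobject $F\subseteq E$ in $\Coh^{\beta,H}(X)$ has $\mathcal{H}^{-1}(F)=0$ and hence is a sheaf sitting in the four-term sequence; the linearity of $\nu_{\alpha,\beta,H}(F)-\nu_{\alpha,\beta,H}(E)$ in $\alpha$ with leading coefficient $R_FR_E(\mu_H(E)-\mu_H(F))$; and the Bogomolov bound $D_F\leq C_F^2/(2R_F)$ combined with $0<\rho_F\leq\rho_E$ to force $\alpha^{*}(F)\leq\tfrac{1}{2}\beta^2+O(1/R_F)$ uniformly. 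Two points need patching. First, ``$\mu_H(F')<\mu_H(E)$ unless $F'=E$'' is not literally what slope-stability gives: a full-rank subsheaf $F'\subsetneq E$ whose quotient is supported in codimension $\geq 2$ has $\mu_H(F')=\mu_H(E)$, and in that case (with $G=0$) your denominator $R_F\rho_E-R_E\rho_F$ vanishes and $\alpha^{*}(F)$ is undefined; the case must be handled separately, which is easy because the heart-quotient then has $\rho=0$ and tilt-slope $+\infty$, so such an $F$ never destabilizes (and likewise cannot witness strict semistability, making the appeal to \Cref{LemmaDiscriminant} unnecessary). Second, the positive lower bound on the denominator for destabilizers of bounded rank genuinely requires discreteness of the possible values of $H^{n-1}\ch{1}$; this holds for the integral polarizations used everywhere in this paper, but not for an arbitrary real class $H\in \mathrm{NS}(X)_{\mathbb{R}}$ as in the ambient setup, where the cited source instead argues via boundedness of the destabilizing quotients. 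Neither issue is fatal for the application at hand.
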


\begin{lemma}[Riemann--Roch for quintic surface]\label{RRQuinticSurface}
    For a quintic surface $S$ and $E\in \Db(S)$, we have
    \begin{equation*}
        \chi(E) = \ch{2}(E) - \tfrac{1}{2}H\ch{1}(E)+ H^2\ch{0}(E).
    \end{equation*}
\end{lemma}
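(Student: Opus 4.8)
The plan is to specialize the Hirzebruch--Riemann--Roch theorem to the surface $S$ and to compute the relevant characteristic classes explicitly. For any smooth projective surface HRR reads $\chi(E) = \int_S \operatorname{ch}(E)\cdot\operatorname{td}(T_S)$, and since $\operatorname{td}(T_S) = 1 + \tfrac{1}{2}c_1(T_S) + \tfrac{1}{12}\big(c_1(T_S)^2 + c_2(T_S)\big)$, extracting the degree-two component of the product gives
\begin{equation*}
    \chi(E) = \ch{2}(E) + \tfrac{1}{2}c_1(T_S)\cdot\ch{1}(E) + \tfrac{1}{12}\big(c_1(T_S)^2 + c_2(T_S)\big)\ch{0}(E).
\end{equation*}
Hence the entire content of the lemma reduces to identifying $c_1(T_S)$ and the scalar $\tfrac{1}{12}(c_1(T_S)^2 + c_2(T_S))$.

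First I would compute the total Chern class of $T_S$ from the normal bundle sequence $0\to T_S\to T_{\mathbb{P}^3}|_S\to N_{S/\mathbb{P}^3}\to 0$. Since $S\in|5H|$ we have $N_{S/\mathbb{P}^3}=\mathcal{O}_S(5H)$, while the Euler sequence gives $c(T_{\mathbb{P}^3}) = (1+H)^4$. Restricting to $S$ (so that all classes of degree exceeding $2$ vanish) and dividing by $c(N_{S/\mathbb{P}^3}) = 1 + 5H$ via Whitney's formula yields
\begin{equation*}
    c(T_S) = \frac{1 + 4H + 6H^2}{1 + 5H} = 1 - H + 11H^2,
\end{equation*}
so that $c_1(T_S) = -H$ and $c_2(T_S) = 11H^2$. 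The first of these already produces the middle term $-\tfrac{1}{2}H\ch{1}(E)$; equivalently it records the adjunction identity $K_S = (K_{\mathbb{P}^3} + 5H)|_S = H$.

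It then remains only to evaluate the constant coefficient. Using $H^2 = \deg S = 5$ we obtain $\tfrac{1}{12}(c_1(T_S)^2 + c_2(T_S)) = \tfrac{1}{12}(H^2 + 11H^2) = H^2$, which is precisely the claimed coefficient of $\ch{0}(E)$; equivalently, by Noether's formula this scalar equals $\chi(\mathcal{O}_S)$, which one computes to be $5 = H^2$ from the structure sequence $0\to\mathcal{O}_{\mathbb{P}^3}(-5)\to\mathcal{O}_{\mathbb{P}^3}\to\mathcal{O}_S\to 0$. Combining the three terms gives the asserted formula. There is no genuine obstacle here: the statement is a routine characteristic-class computation, and the only point demanding care is to perform the division in the Chern-class computation as an identity on $S$, so that $c_2(T_S)$ is obtained as a class on the surface rather than inadvertently on $\mathbb{P}^3$.
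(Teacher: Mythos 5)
Your computation is correct: the Chern class of $T_S$ via the Euler and normal bundle sequences gives $c_1(T_S)=-H$ and $c_2(T_S)=11H^2$, so $\tfrac{1}{12}(c_1^2+c_2)=5=H^2$ and HRR yields exactly the stated formula. The paper states this lemma without proof as a standard Hirzebruch--Riemann--Roch computation, and your argument is precisely that intended computation.
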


\section{Clifford-type bound on plane quintic curves}\label{section3}
In this section we recall some Clifford-type inequalities for vector bundles on smooth plane quintic curves, which will later serve as a key input in proving the stronger Bogomolov--Gieseker type inequalities for quintic surfaces and threefolds.

We begin with the following result from \cite{feyzbakhsh_higher_2021}, which provides an explicit bound for the number of global sections of a semistable bundle on a plane curve of degree $l\geq 5$. We denote by $h^0(E) := \dim H^0(C,E)$.

\begin{proposition}[{\cite[Theorem $5.5$]{feyzbakhsh_higher_2021}}]
    Let $C\subset \mathbb{P}^2$ be a smooth irreducible plane curve of degree $l\geq 5$. Let $E$ be a semistable vector bundle of rank $r$ and  degree $d$ on $C$ satisfying $0\leq d \leq \frac{rl(l-3)}{2}$. Then
    \begin{equation}\notag
        h^0(E) \leq 
        \begin{cases}
            r + \left(\frac{3}{2l}+ \frac{d}{2rl^2}\right) d, & \text{if } d\geq rl, \\[0ex]
            \max \{3r + d - rl, r + \frac{rl+r}{rl^2 - d}d \}, & \text{if } d < rl.
        \end{cases}
    \end{equation}
\end{proposition}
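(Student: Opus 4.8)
The plan is to prove the bound by relating sections of $E$ to sections of its twists $E(k)\coloneqq E\otimes\mathcal{O}_C(k)$ through the restricted Euler sequence on $\mathbb{P}^2$, treating the slope $\mu\coloneqq d/r$ relative to $\deg\mathcal{O}_C(1)=l$ as the governing parameter. First I would record the numerical input: adjunction gives $K_C=\mathcal{O}_C(l-3)$ and genus $g=\binom{l-1}{2}$, so Riemann--Roch reads $h^0(E)-h^1(E)=d-\tfrac{1}{2}rl(l-3)$, while Serre duality identifies $h^1(E)=h^0(E^\vee(l-3))$. The hypothesis $0\le d\le\tfrac{1}{2}rl(l-3)$ says exactly that both $E$ and its Serre dual $E^\vee\otimes K_C$ are semistable of nonnegative slope, i.e.\ $E$ lies in the higher-rank Clifford range; in particular $\chi(E)\le 0$, so $h^0(E)\le h^1(E)=h^0(E^\vee(l-3))$. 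A useful sanity check that guides the whole argument is that, writing $t=d/(rl)$, the target bound in the first case equals $\tfrac{(d+rl)(d+2rl)}{2rl^2}=\tfrac{r}{2}(t+1)(t+2)=r\,h^0(\mathbb{P}^2,\mathcal{O}(t))$ when $t\in\mathbb{Z}_{\ge 0}$: the assertion is that a semistable bundle of integral ``Veronese slope'' $t$ has at most as many sections as $r$ copies of $\mathcal{O}_C(t)$.

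The main tool is the dual Euler sequence $0\to\Omega_{\mathbb{P}^2}(1)\to\mathcal{O}^{\oplus 3}\to\mathcal{O}(1)\to 0$ restricted to $C$, which produces a rank-$2$ bundle $M\coloneqq\Omega_{\mathbb{P}^2}(1)|_C$ of degree $-l$ fitting in $0\to M\to\mathcal{O}_C^{\oplus 3}\to\mathcal{O}_C(1)\to 0$. A preliminary step is to check that $M$ is semistable of slope $-l/2$ for $l\ge 5$; since $\Omega_{\mathbb{P}^2}(1)$ is stable on $\mathbb{P}^2$, this follows from a Mehta--Ramanathan/Flenner-type restriction theorem (or a direct computation on plane curves). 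Tensoring the sequence with $E(k-1)$ gives, for each $k$,
\begin{equation*}
0\to M\otimes E(k-1)\to E(k-1)^{\oplus 3}\to E(k)\to 0,
\end{equation*}
and the associated long exact sequence yields $h^0(E(k))\le 3\,h^0(E(k-1))-h^0(M\otimes E(k-1))+h^1(M\otimes E(k-1))$. Because we are in characteristic zero, tensor products of semistable bundles are semistable, so $M\otimes E(k-1)$ is semistable of slope $\mu+(k-1)l-l/2$; its $H^0$ vanishes once this slope is negative, and its $H^1$ vanishes once the Serre-dual slope is negative. This is the engine of the recursion.

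The plan for the first case $d\ge rl$ (so $\mu\ge l$) is to iterate the recursion downward in $k$, untwisting $E$ by $\mathcal{O}_C(1)$ until the slopes of the relevant semistable bundles fall below the thresholds forcing the $M$-correction terms to vanish. Solving the resulting coupled recursion between $h^0(E(-j))$ and $h^0(M\otimes E(-j))$ then yields the quadratic $\tfrac{(d+rl)(d+2rl)}{2rl^2}$; this improves on the naive hyperplane-restriction estimate $h^0(E)\le h^0(E(-1))+rl$ precisely because semistability of $M\otimes E(-j)$ makes the subtracted terms $h^0(M\otimes E(-j))$ positive in the relevant range. For the second case $d<rl$ the recursion terminates immediately and the estimate instead comes from analysing the bottom of the range directly: here $E$ may fail to be globally generated, and the two competing bounds $3r+d-rl$ and $r+\tfrac{(l+1)rd}{rl^2-d}$ correspond to the two extremal configurations for the evaluation map $H^0(E)\otimes\mathcal{O}_C\to E$ and its syzygy bundle, the rational term arising by optimising over the degree of the comparison subobject, whence the $\max$.

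The step I expect to be the genuine obstacle is pinning down the exact constants in the low-slope regime and justifying the $\max$: one must control the $H^1$/cokernel error terms in the recursion uniformly and identify precisely the two extremal configurations rather than merely bounding them crudely. An alternative and conceptually cleaner route, which makes the $\max$ transparent, is to push $E$ forward to the one-dimensional sheaf $i_*E$ on $\mathbb{P}^2$ and bound $h^0(E)=\Hom_{\mathbb{P}^2}(\mathcal{O},i_*E)$ by running tilt/Bridgeland wall-crossing on $\mathbb{P}^2$: the two terms of the $\max$ then appear as the two numerical walls at which $\mathcal{O}$ (or a twist) can map to $i_*E$, and the rational function is read off from the wall equation. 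I would develop the elementary recursion to fix the shape of the bound and fall back on the wall-crossing picture to nail the constants.
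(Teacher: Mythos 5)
First, a point of comparison: the paper does not prove this proposition at all --- it is imported verbatim from \cite[Theorem 5.5]{feyzbakhsh_higher_2021}, where the proof runs by pushing $E$ forward to the torsion sheaf $i_*E$ on $\mathbb{P}^2$ and analysing the walls for tilt-stability at which some $\mathcal{O}_{\mathbb{P}^2}(-k)$ can map to it. That is precisely the ``alternative route'' you sketch in your final paragraph, and your intuition that the two branches of the $\max$ correspond to two distinct numerical walls is the right one. Your numerical bookkeeping is also correct: $g=\binom{l-1}{2}$, $K_C=\mathcal{O}_C(l-3)$, $\chi(E)=d-\tfrac12 rl(l-3)\le 0$, and the identity $r+\bigl(\tfrac{3}{2l}+\tfrac{d}{2rl^2}\bigr)d=\tfrac r2(t+1)(t+2)$ with $t=d/(rl)$ is a genuinely useful sanity check.

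However, the argument you actually propose to develop --- the recursion coming from the restricted Euler sequence --- would fail as stated, and not merely for want of ``pinning down constants.'' The correction term $h^1(M\otimes E(k-1))$ is provably nonzero and large throughout the hypothesis range: by Serre duality and Riemann--Roch,
\begin{equation*}
h^1(M\otimes E(-1))-h^0(M\otimes E(-1))=-\chi(M\otimes E(-1))=r\bigl(l^2-2\mu\bigr)>0
\end{equation*}
for every $\mu=d/r\le\tfrac12 l(l-3)<\tfrac12 l^2$, so the inequality $h^0(E)\le 3\,h^0(E(-1))-h^0(M\otimes E(-1))+h^1(M\otimes E(-1))$ carries an irreducible additive error of size at least $r(l^2-2\mu)$ at every step, on top of the multiplicative factor $3$. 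Iterating such a recursion over the roughly $t$ untwists needed to reach negative slope produces exponential, not quadratic, growth in $t$; there is no mechanism in your scheme to cancel the $h^1$ terms, since they are $h^0$'s of semistable bundles of large positive slope whose vanishing thresholds lie entirely outside the Clifford range $0\le\mu\le\tfrac12 l(l-3)$. The load-bearing part of your plan therefore does not close, and the fallback that would work (the $\mathbb{P}^2$ wall-crossing computation for $i_*E$) is only named, not executed. As written, the proposal is a plausible research plan rather than a proof; to complete it you would need to carry out the wall-crossing argument of the cited reference in detail.
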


For our purposes we only require a simplified version for smooth plane quintic curves $C_5\subset \mathbb{P}^2$ (of genus $6$).
By combining Serre duality and the Riemann--Roch theorem, one obtains an explicit bound for the entire slope range $[0,10]$. Denote $H_C:= \mathcal{O}_{\mathbb{P}^2}(1)|_{C}$, for convenience, we will use $\mu_{H_C}$ in the sequel instead of $\frac{d}{r}$. Note that $\mu_{H_C}(E) = \frac{d}{5r}$ in this case. 

\begin{proposition}\label{CliffordType}
    Let $C\subset\mathbb{P}^2$ be a smooth irreducible plane quintic curve, and let $E$ be a semistable vector bundle of rank $r$ and degree $d$ on $C$. Then
    \begin{equation}
        \frac{h^0(E)}{r} \leq 
        \begin{cases}
            \frac{7}{2}\mu_{H_C}(E) - 1 & \text{if } \frac{8}{7}\leq\mu_{H_C}(E)\leq 2; \\[0ex]
            3 & \text{if } 1\leq \mu_{H_C}(E) \leq \frac{8}{7}; \\[0ex]
            5\mu_{H_C}(E) -2   & \text{if } \frac{6}{7}\leq \mu_{H_C}(E) < 1; \\[0ex]
            1 + \frac{3}{2}\mu_{H_C}(E) & \text{if } 0 < \mu_{H_C}(E) \leq \frac{6}{7}. 
        \end{cases}
    \end{equation}
    Moreover, if $\mu_{H_C}(E)> 2$, then $\frac{h^0(E)}{r} = 5\mu_{H_C}(E) - 5$.
\end{proposition}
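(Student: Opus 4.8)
The plan is to reduce the entire statement to the cited bound in the preceding Proposition, which applies on the range $0\le d\le \tfrac{rl(l-3)}{2}=5r$, i.e.\ $\mu_{H_C}(E)\in[0,1]$, and to cover the remaining slope ranges via Riemann--Roch and Serre duality. I first record the geometry of a smooth plane quintic $C$: it has genus $g=6$, $\deg H_C=5$, and by adjunction $K_C=\mathcal{O}_{\mathbb{P}^2}(2)|_C=2H_C$ of degree $10=2g-2$; in particular $\mu:=\mu_{H_C}(E)=d/(5r)$. Riemann--Roch gives $h^0(E)-h^1(E)=d-5r$, and Serre duality gives $h^1(E)=h^0(E^\vee\otimes K_C)$. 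Setting $F:=E^\vee\otimes K_C$, the bundle $F$ is again semistable of rank $r$ (dualizing and twisting by a line bundle preserve semistability), with degree $10r-d$ and slope $\mu_{H_C}(F)=2-\mu$.

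For $0<\mu\le 1$ I apply the cited bound directly. Substituting $l=5$ and $d=5r\mu$ into the $d<rl$ branch turns it into $h^0(E)/r\le\max\{\,5\mu-2,\ 1+6\mu/(5-\mu)\,\}$, while the boundary case $d=5r$ gives $h^0(E)/r\le 3$. It then remains to verify that these are dominated by the two claimed linear functions. This rests on the elementary inequalities $5\mu-2\le 1+\tfrac32\mu$ (with equality exactly at $\mu=6/7$), $6\mu/(5-\mu)\le\tfrac32\mu$ (valid for $\mu\le 1$), and $1+6\mu/(5-\mu)\le 5\mu-2$ on $[6/7,1)$, the last of which reduces to the quadratic inequality $5\mu^2-22\mu+15\le 0$ whose smaller root lies just below $6/7$. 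Together these give the bound $1+\tfrac32\mu$ on $(0,6/7]$ and $5\mu-2$ on $[6/7,1]$.

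For $1\le\mu\le 2$ I feed the dual bundle $F$, whose slope $2-\mu$ now lies in $[0,1]$, back into the range just handled. Combining $h^0(E)/r=5\mu-5+h^0(F)/r$ with the bounds applied at slope $2-\mu$ yields $h^0(E)/r\le 3$ when $\mu\in[1,8/7]$ (so that $2-\mu\in[6/7,1]$) and $h^0(E)/r\le\tfrac72\mu-1$ when $\mu\in[8/7,2]$ (so that $2-\mu\in[0,6/7]$); the two pieces agree at $\mu=8/7$. Finally, for $\mu>2$ the dual $F$ has negative slope, so semistability forces $h^0(F)=0$ (a nonzero section gives a subsheaf $\mathcal{O}_C\subset F$ of slope $0$, contradicting $\mu_{H_C}(F)<0$); hence $h^1(E)=0$ and Riemann--Roch yields the exact value $h^0(E)/r=5\mu-5$.

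Since all computations are elementary, there is no real conceptual obstacle. The only points requiring care are the bookkeeping under duality --- obtaining the slope reflection $\mu\mapsto2-\mu$ and the additive shift by $5\mu-5$ correctly --- and checking that the claimed linear bounds really do dominate the cited max-formula near $\mu=6/7$, where the two linear expressions $5\mu-2$ and $1+\tfrac32\mu$ meet.
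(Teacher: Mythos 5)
Your proposal is correct and follows exactly the route the paper indicates (applying the cited degree-$\le 5r$ bound of Feyzbakhsh--Li directly on $\mu_{H_C}\in(0,1]$, then transferring to $[1,2]$ and $(2,\infty)$ via Serre duality and Riemann--Roch with $F=E^\vee\otimes K_C$ of slope $2-\mu$); all the elementary comparisons, including the quadratic $5\mu^2-22\mu+15\le 0$ on $[6/7,1)$, check out. The only point worth a half-sentence is the endpoint $\mu=2$, where $F$ has slope exactly $0$ and you need $h^0(F)\le r$ --- this still follows from the cited max-formula at $d=0$ (or from semistability plus Clifford), so there is no gap.
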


When $E$ is not semistable, analogous estimates can be obtained in terms of the slopes of the Harder--Narasimhan factors.

For a real-value function $h:I\to \R$, where $I\subset \R$ and an interval $[a,b]\subset I$, we denote $\overline{h}_{a,b}:[a,b]\to \R$ as the `convex roof' of $h|_{[a,b]}$. More precisely, $\overline{h_{a,b}}$ is the smallest function that is convex from below and satisfies $\overline{h}_{a,b}\geq h|_{[a,b]}$.

\begin{lemma}\label{lem:convexfunction}
    Let $(X,H)$ be a polarized smooth variety such that every $H$-slope-semistable torsion-free sheaf $E$ satisfies 
    \begin{align*}
         \frac{h^0(E)}{\rk(E)}\leq h(\mu_H(E))
    \end{align*}
    for some function $h$. Then for every torsion-free sheaf $E$, we have 
    \begin{align*}
        \frac{h^0(E)}{\rk(E)} \leq \overline{h}_{\mu^-_{H}(E),\mu^+_H(E)}(\mu_H(E)).
    \end{align*}
\end{lemma}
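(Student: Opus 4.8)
The plan is to reduce to the semistable case via the Harder--Narasimhan filtration and to control the loss incurred by passing to the filtration through the concavity of the roof $\overline{h}_{\mu^-_H(E),\mu^+_H(E)}$. Concretely, I would take the HN filtration $0 = E_0 \subset E_1 \subset \cdots \subset E_m = E$ of the torsion-free sheaf $E$, with semistable quotients $G_i := E_i/E_{i-1}$ of slopes $\mu_i := \mu_H(G_i)$ satisfying $\mu^+_H(E) = \mu_1 > \cdots > \mu_m = \mu^-_H(E)$. The first point to record is that each $G_i$ is itself torsion-free: a nonzero torsion subsheaf would have slope $+\infty$ and would therefore violate the semistability of $G_i$, whose slope is finite. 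Thus the hypothesis of the lemma applies to every factor and gives $h^0(G_i) \le \rk(G_i)\, h(\mu_i)$.

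Next I would exploit the left-exactness of global sections. Applying $H^0(-)$ to each short exact sequence $0 \to E_{i-1} \to E_i \to G_i \to 0$ yields $h^0(E_i) \le h^0(E_{i-1}) + h^0(G_i)$, and an induction along the filtration produces the subadditivity bound $h^0(E) \le \sum_{i=1}^m h^0(G_i) \le \sum_{i=1}^m \rk(G_i)\, h(\mu_i)$. Setting $\lambda_i := \rk(G_i)/\rk(E)$, so that $\lambda_i \ge 0$ and $\sum_i \lambda_i = 1$, and dividing by $\rk(E)$, this reads $h^0(E)/\rk(E) \le \sum_i \lambda_i\, h(\mu_i)$.

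The final ingredient is that $\mu_H(E)$ is exactly the rank-weighted barycenter of the $\mu_i$. Since $\rk$ and $H^{n-1}\ch{1}$ are additive on short exact sequences, one has $\mu_H(E) = \sum_i \lambda_i \mu_i$, and every $\mu_i$ lies in $[\mu^-_H(E), \mu^+_H(E)]$. Writing $\overline{h}$ for $\overline{h}_{\mu^-_H(E),\mu^+_H(E)}$, which is concave (``convex from below'') and satisfies $\overline{h} \ge h$ on this interval, Jensen's inequality closes the argument:
\[
    \frac{h^0(E)}{\rk(E)} \le \sum_i \lambda_i\, h(\mu_i) \le \sum_i \lambda_i\, \overline{h}(\mu_i) \le \overline{h}\Big(\sum_i \lambda_i \mu_i\Big) = \overline{h}(\mu_H(E)).
\]
Equivalently, one may invoke directly the characterization of the concave roof as the pointwise supremum of $\sum_i \lambda_i h(x_i)$ over all convex combinations with barycenter $\mu_H(E)$ and $x_i \in [\mu^-_H(E),\mu^+_H(E)]$; the case $m=1$ recovers the hypothesis.

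The one step that genuinely requires care --- and the only place the statement could go wrong --- is the direction of convexity in the last display. The naive subadditivity bound $\sum_i \lambda_i h(\mu_i)$ is a convex combination of values of $h$, and in general it is strictly larger than $h(\mu_H(E))$; the whole point of passing to $\overline{h}$ is that the upper envelope is the smallest function of the barycenter alone that still dominates every such convex combination. Using a lower convex envelope here would reverse Jensen and break the argument, so the crux is to use that $\overline{h}$ is an \emph{upper} (concave) envelope. The remaining steps --- torsion-freeness of the HN factors, subadditivity of $h^0$, and additivity of slope --- are routine.
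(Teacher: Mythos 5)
Your proof is correct and follows essentially the same route as the paper's: Harder--Narasimhan filtration, the hypothesis applied to each semistable factor, subadditivity of $h^0$ from left-exactness of global sections, and then the concavity (Jensen) property of the roof $\overline{h}_{\mu^-_H(E),\mu^+_H(E)}$ together with the fact that $\mu_H(E)$ is the rank-weighted barycenter of the factor slopes. Your additional observations (torsion-freeness of the HN factors and the correct direction of the envelope) are sound elaborations of steps the paper leaves implicit.
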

\begin{proof}
    Let
    \begin{equation*}
        0=E_0\hookrightarrow E_1\hookrightarrow\cdots\hookrightarrow E_n = E,
    \end{equation*}
    be the Harder--Narasimhan filtration of $E$.
    For each quotient $E_i/E_{i-1}$, semi-stability and our assumption yield
    \begin{equation*}
        \frac{h^0(E_i/E_{i-1})}{\rk(E_i/E_{i-1})} \leq h(\mu_H(E_i/E_{i-1})). 
    \end{equation*}
    Using these inequalities and additivity of rank and degree, we have
    \begin{align*}
        \frac{h^0(E)}{\rk(E)}&\leq \sum\limits_{i=1}^n\frac{1}{\rk(E)}h^0(E_i/E_{i-1}) = \sum\limits_{i=1}^n\frac{\rk(E_i/E_{i-1})}{\rk(E)}\frac{h^0(E_i/E_{i-1})}{\rk(E_i/E_{i-1})} \\
        &\leq \sum\limits_{i=1}^n\frac{\rk(E_i/E_{i-1})}{\rk(E)}h(\mu_H(E_i/E_{i-1})) \\
        &\leq \sum\limits_{i=1}^n\frac{\rk(E_i/E_{i-1})}{\rk(E)}\overline{h}_{\mu^-_{H}(E),\mu^+_H(E)}(\mu_H(E_i/E_{i-1})) \\
        &\leq \overline{h}_{\mu^-_{H}(E),\mu^+_H(E)}(\mu_H(E)),
    \end{align*}
    where the last line follows from convexity of $\overline{h}_{\mu^-_{H}(E),\mu^+_H(E)}$.
\end{proof}


\begin{figure}[H]
    \centering
    \begin{tikzpicture}[line cap=round,line join=round,x=10cm,y=10cm]
  \begin{axis}[
    axis lines=middle,
    xmin=0, xmax=3,
    ymin=0, ymax=8,
    xlabel={$\mu_H$},
    ylabel = {$\frac{h^0}{r}$},
    ylabel style={font=\Large},
    xtick={6/7,1,8/7}, 
    xticklabels={$\frac{6}{7}$, $1$, $\frac{8}{7}$},
    ytick={1,3},
    yticklabels={$1$,$3$},
    width= 14cm, height = 10cm
  ]
    \addplot[thick,domain=0:6/7]{1+ 1.5*x};
    \addplot[thick,domain=6/7:1]{5*x - 2};
    \addplot[thick,domain=1:8/7]{3};
    \addplot[thick,domain=8/7:2]{3.5*x - 1};
    \addplot[thick,domain=2:3]{5*x - 5};

    \draw[thick,fill=white] (axis cs:2,5) circle (1.5pt);
    \filldraw (axis cs:2,6) circle (1.5pt);

    \draw [line width=1pt,dash pattern=on 1pt off 5pt] (1,0)-- (1,3);
    \draw [line width=1pt,dash pattern=on 1pt off 5pt] (6/7,0)-- (6/7,16/7);
    \draw [line width=1pt,dash pattern=on 1pt off 5pt] (8/7,0)-- (8/7,3);
    \draw [line width=1pt,dash pattern=on 1pt off 5pt] (1,3)-- (0,3);
    
  \end{axis}
\end{tikzpicture}

    \caption{Clifford-type bound on plane quintic curves}
    \label{fig:H0boundQuinticCurves}
\end{figure}
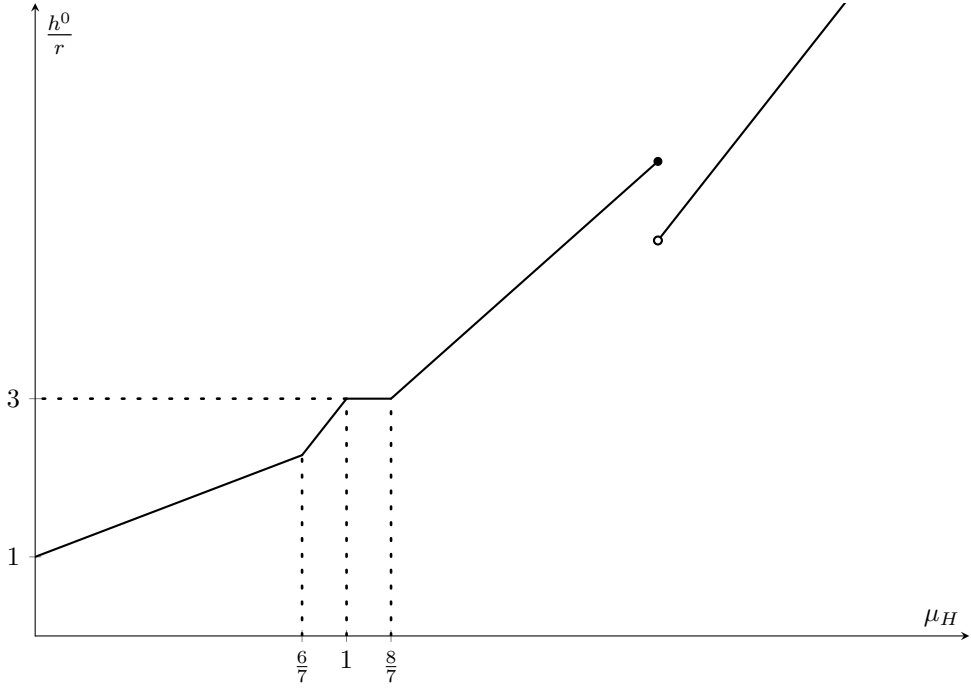

With $h$ defined as 
\begin{equation}
        h(x) = 
        \begin{cases}
            5x - 5 & \text{if } 2< x; \\
            \frac{7}{2}x - 1 & \text{if } \frac{8}{7}\leq x\leq 2; \\
            3 & \text{if } 1\leq x \leq \frac{8}{7}; \\ 
            5x -2   & \text{if } \frac{6}{7}\leq x < 1; \\ 
            1 + \frac{3}{2}x & \text{if } 0 \leq x \leq \frac{6}{7}.; \\ 
            0 & \text{if } x < 0.
        \end{cases}
\end{equation}
We compute the following two special cases.

\begin{corollary}\label{H0Bound}
     Let $C\subset \mathbb{P}^2$ be a smooth plane quintic curve, and let $E$ be a vector bundle of rank $r$ and degree $d$ on $C$. Assume that $0 \leq \mu^-_{H_C}(E) < \mu^+_{H_C}(E) \leq \frac{6}{7}$, then \begin{equation}
        \frac{h^0(E)}{r} \leq 1 + \frac{3}{2} \mu_{H_C}(E).
    \end{equation}
    Equivalently, $\overline{h}_{0,\frac{6}{7}}(x) = 1+ \frac{3}{2}x$.
\end{corollary}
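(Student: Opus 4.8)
The plan is to deduce the bound directly from \Cref{CliffordType} and \Cref{lem:convexfunction}, the actual content being the elementary computation of the convex roof $\overline{h}_{0,\frac{6}{7}}$. Since $C$ is a smooth curve, every torsion-free sheaf on $C$ is a vector bundle, so \Cref{lem:convexfunction} applies with $h$ the piecewise-linear function displayed above: for any bundle $E$ whose Harder--Narasimhan factors have slopes in an interval $[a,b]$, one gets $\frac{h^0(E)}{r}\leq \overline{h}_{a,b}(\mu_{H_C}(E))$.

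The key observation is that on the interval $[0,\frac{6}{7}]$ the function $h$ is already affine: by construction $h(x)=1+\frac{3}{2}x$ throughout $[0,\frac{6}{7}]$, and one checks that this matches the neighbouring piece $5x-2$ at the endpoint $x=\frac{6}{7}$ (both give $\frac{16}{7}$), so there is no corner inside the range. An affine function is convex from below and dominates itself, hence it coincides with its own convex roof; therefore $\overline{h}_{0,\frac{6}{7}}(x)=1+\frac{3}{2}x$. Moreover, because $h$ is affine on the whole of $[0,\frac{6}{7}]$, it is affine on every subinterval, so for any $E$ with $0\leq \mu^-_{H_C}(E)<\mu^+_{H_C}(E)\leq \frac{6}{7}$ we likewise have $\overline{h}_{\mu^-_{H_C}(E),\mu^+_{H_C}(E)}(x)=1+\frac{3}{2}x$.

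Combining the two steps, \Cref{lem:convexfunction} yields
\[
\frac{h^0(E)}{r}\leq \overline{h}_{\mu^-_{H_C}(E),\mu^+_{H_C}(E)}(\mu_{H_C}(E))=1+\frac{3}{2}\mu_{H_C}(E),
\]
since $\mu_{H_C}(E)$, being a weighted average of the slopes of the Harder--Narasimhan factors, lies in $[\mu^-_{H_C}(E),\mu^+_{H_C}(E)]$. This is exactly the claimed inequality. There is no genuine obstacle in this corollary: the entire analytic difficulty has already been absorbed into the Clifford-type estimate of \Cref{CliffordType}, and here we merely record the convenient feature that the lowest-slope regime of that bound is linear, so that passing from semistable bundles to arbitrary bundles via the convex-roof lemma costs nothing. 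The only point requiring a moment's care is confirming that the relevant branch of $h$ is affine across the \emph{entire} range $[0,\frac{6}{7}]$, rather than being merely piecewise affine with a hidden corner inside the interval.
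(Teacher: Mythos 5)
Your proof is correct and takes essentially the same route as the paper: Corollary~\ref{H0Bound} is stated there as an immediate consequence of Proposition~\ref{CliffordType} together with Lemma~\ref{lem:convexfunction}, the only content being that $h(x)=1+\tfrac{3}{2}x$ is affine on all of $[0,\tfrac{6}{7}]$ and hence coincides with its own convex roof, which is exactly the computation you carry out.
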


\begin{corollary}\label{H0Bound1}
    Let $C\subset \mathbb{P}^2$ be a smooth plane quintic curve, and let $E$ be a vector bundle of rank $r$ and degree $d$ on $C$. Assume that $\mu^+_{H_C}(E) \leq 2$, then \begin{equation}
        \frac{h^0(E)}{r} \leq h(\mu^+_{H_C}(E)).
    \end{equation}
    Equivalently, $\overline{h}_{-\infty,\mu^+_{H_C}(E)}(x) = h(\mu^+_{H_C}(E))$. 
\end{corollary}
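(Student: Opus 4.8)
The plan is to read the bound directly off the Harder--Narasimhan estimate of \Cref{lem:convexfunction}, exploiting that $h$ is non-decreasing on the range $(-\infty,2]$ singled out by the hypothesis $\mu^+_{H_C}(E)\le 2$.

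First I would record the monotonicity of $h$. On $[0,2]$ the successive branches $1+\tfrac{3}{2}x$, $5x-2$, $3$, and $\tfrac{7}{2}x-1$ are individually non-decreasing and agree at the breakpoints $x=\tfrac{6}{7},1,\tfrac{8}{7}$ (with common values $\tfrac{16}{7},3,3$), while at $x=0$ the value jumps upward from $0$ to $1$. Hence $h$ is non-decreasing on all of $(-\infty,2]$. This is exactly where $\mu^+_{H_C}(E)\le 2$ enters: past $x=2$ the function drops from $6$ to $5$, so both the monotonicity and the argument below would break down.

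Next I would invoke \Cref{lem:convexfunction} with $(X,H)=(C,H_C)$. On a smooth curve torsion-free sheaves are vector bundles, and \Cref{CliffordType}, supplemented by the degree-$0$, negative-slope, and $\mu_{H_C}>2$ cases, shows that every $H_C$-semistable bundle $F$ satisfies $h^0(F)/\rk(F)\le h(\mu_{H_C}(F))$. The lemma then yields
\begin{equation*}
    \frac{h^0(E)}{r}\le \overline{h}_{\mu^-_{H_C}(E),\,\mu^+_{H_C}(E)}\big(\mu_{H_C}(E)\big).
\end{equation*}
It remains to bound the right-hand side by $h(\mu^+_{H_C}(E))$. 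Since $[\mu^-_{H_C}(E),\mu^+_{H_C}(E)]\subseteq(-\infty,2]$ and $h$ is non-decreasing there, the constant function with value $h(\mu^+_{H_C}(E))$ dominates $h$ on this interval; being constant it is an admissible competitor for the roof $\overline{h}$, so $\overline{h}_{\mu^-,\mu^+}\le h(\mu^+_{H_C}(E))$ pointwise. Combining with the display proves the stated inequality.

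For the reformulation $\overline{h}_{-\infty,\mu^+}\equiv h(\mu^+_{H_C}(E))$ I would verify that the constant is the genuine roof on the half-line, not merely an upper competitor. Recall that $\overline{h}$ is concave, this being the property invoked in the final Jensen step of \Cref{lem:convexfunction}. The key observation is that any concave function on $(-\infty,\mu^+_{H_C}(E)]$ bounded below, as every majorant of $h\ge 0$ is, must be non-increasing, since a strictly positive slope at any point would force it to $-\infty$ as $x\to-\infty$. A non-increasing majorant $g\ge h$ then satisfies $g(x)\ge g(\mu^+_{H_C}(E))\ge h(\mu^+_{H_C}(E))$ for all $x\le\mu^+_{H_C}(E)$, so no concave majorant falls below the constant $h(\mu^+_{H_C}(E))$; this pins $\overline{h}_{-\infty,\mu^+}$ to that constant. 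I expect the only steps worth spelling out to be this boundedness--monotonicity argument on the half-line and the monotonicity check for $h$ up to $x=2$; neither is deep, and both are precisely where the hypothesis $\mu^+_{H_C}(E)\le 2$ is used.
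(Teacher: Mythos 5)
Your proposal is correct and matches the route the paper intends: the corollary is stated as a direct computation of the roof $\overline{h}$ via \Cref{lem:convexfunction}, and your verification that $h$ is non-decreasing on $(-\infty,2]$ (so the constant $h(\mu^+_{H_C}(E))$ is the smallest concave majorant there, with the drop of $h$ past $x=2$ explaining the hypothesis) is exactly the computation the paper leaves implicit. Your reading of ``convex from below'' as concavity, forced by the Jensen step in the lemma's proof, and your note that the slope-$0$ and negative-slope cases must supplement \Cref{CliffordType}, are both accurate.
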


\begin{remark}
    \Cref{H0Bound} and \Cref{H0Bound1} provide the precise estimates for the ranges of slopes that appear in the restriction arguments for quintic surfaces (\Cref{ChInequalityOnSurface}).

    In particular, \Cref{H0Bound} will control the case of small slope, while \Cref{H0Bound1} covers the complementary range up to $\mu^+_{H_C}(E) \leq 2$.
\end{remark}

\section{Stronger Bogomolov--Gieseker type inequality on quintic surface and quintic threefolds}
In this section, we prove our main theorem. The proof proceeds by reducing the problem to a quintic surface via a restriction theorem, then applying a Clifford-type inequality on curves to derive the bound. The proof here is similar to \cite[Proposition 5.2]{li_stability_2019}, see also \cite[Section 3]{Naoki23_BG_hypersurfaces}. 

\subsection{Lemmas: restriction theorem and reduction lemma}
To begin with, we have the following restriction theorem, which generalizes \cite{feyzbakhshEffectiveRestrictionTheorem2022}, \cite[Lemma 5.1]{li_stability_2019}. For unordered endpoints $a,b \in \mathbb{R}$, we denote
\[
\llbracket a,b \rrbracket := [\min(a,b),\, \max(a,b)].
\]
\begin{lemma}[\textbf{Restriction theorem}]\label{RestrictionTheorem}
    Let $(X,H)$ be a polarized smooth projective variety with dimension $n \geq 2$. Let $E\in \Coh(X)$. Suppose there exists $\alpha > 0$ and $m\in\mathbb{Z}_{> 0 }$ such that both $E$ and  $E(-mH)[1]$ are in $\Coh^{0,H}(X)$ and $\nu_{\alpha,0,H}$-tilt-semistable.

    Then for a smooth irreducible subvariety $Y\in |mH|$, the restricted sheaf $E|_Y$ has $\rk(E) = \rk(E|_{Y})$, $H_Y^{n-2}\ch{1}(E|_Y) = mH^{n-1}\ch{1}(E)$ and when $n \geq 3$, $\ch{2}(E|_Y) = mH\ch{2}(E)$. Moreover, we have
    \begin{align*}
    [\mu_{H_Y}^-(E|_Y),\mu^+_{H_Y}(E|_Y)]\subseteq \llbracket \tfrac{m}{2}+\nu_{\alpha,0,H}(E), \tfrac{m}{2}+\nu_{\alpha,0,H}(E(-mH)[1]) \rrbracket.
    \end{align*} In particular, if $\nu_{\alpha,0,H}(E) = \nu_{\alpha,0,H}(E(-mH)[1])$, then $E|_Y$ is $H_Y$-slope-semistable.
\end{lemma}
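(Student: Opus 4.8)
The plan is to realize the restricted sheaf as the cone of multiplication by the section cutting out $Y$, place that cone inside the tilted heart $\Coh^{0,H}(X)$ as an extension of the two given tilt-semistable objects, and then translate tilt-slope information on $X$ into $H_Y$-slope information on $Y$ through a single slope identity.

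First I would record what the hypotheses force on $E$. Since $E(-mH)[1]\in\Coh^{0,H}(X)$, its $(-1)$-st cohomology sheaf $E(-mH)$ lies in $\mathcal F_{0,H}$, hence is torsion-free with $\mu^+_H(E(-mH))\le 0$; in particular $E$ itself is torsion-free, while $E\in\Coh^{0,H}(X)$ gives $\mu^-_H(E)>0$. Let $s\in H^0(\mathcal O_X(mH))$ cut out $Y$ and $i\colon Y\hookrightarrow X$. Because $E$ is torsion-free, multiplication $s\colon E(-mH)\to E$ is injective (its kernel would be a torsion subsheaf supported on $Y$), so $Li^*E=E|_Y$ is a genuine sheaf and there is a short exact sequence
\begin{equation*}
0\to E(-mH)\xrightarrow{\ s\ } E\to i_*(E|_Y)\to 0
\end{equation*}
in $\Coh(X)$. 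Taking $H$-degrees, equivalently applying the restriction identity $mH^{n-i}\ch{i}(E)=H_Y^{n-i-1}\ch{i}(E|_Y)$, yields at once $\rk(E|_Y)=\rk(E)$, $H_Y^{n-2}\ch{1}(E|_Y)=mH^{n-1}\ch{1}(E)$, and, when $n\ge 3$, the stated identity for $\ch{2}(E|_Y)$.

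The key computation is a slope dictionary between $Y$ and $X$. For any coherent sheaf $G$ on $Y$ the pushforward $i_*G$ is a torsion sheaf on $X$, so $\nu_{\alpha,0,H}(i_*G)$ is independent of $\alpha$; using $[Y]=mH$, $H_Y^{n-1}=mH^n$ and Grothendieck--Riemann--Roch (equivalently the restriction identity applied to $i_*G$), I would obtain
\begin{equation*}
\nu_{\alpha,0,H}(i_*G)=\frac{H^{n-2}\ch{2}(i_*G)}{H^{n-1}\ch{1}(i_*G)}=\mu_{H_Y}(G)-\tfrac{m}{2}.
\end{equation*}
Moreover $i_*$ is exact and carries torsion sheaves on $Y$ into $\mathcal T_{0,H}\subseteq\Coh^{0,H}(X)$, so it sends short exact sequences of sheaves on $Y$ to short exact sequences in the tilted heart. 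Now I would exploit tilt-semistability. Rotating the sequence above gives a short exact sequence $0\to E\to i_*(E|_Y)\to E(-mH)[1]\to 0$ in $\Coh^{0,H}(X)$, the heart being closed under extensions. Writing $\nu_1=\nu_{\alpha,0,H}(E)$ and $\nu_2=\nu_{\alpha,0,H}(E(-mH)[1])$, semistability of the two outer terms together with the standard inequalities $\nu^+(B)\le\max(\nu^+(A),\nu^+(C))$ and $\nu^-(B)\ge\min(\nu^-(A),\nu^-(C))$ for a short exact sequence $0\to A\to B\to C\to 0$ yield
\begin{equation*}
\min(\nu_1,\nu_2)\le \nu^-_{\alpha,0,H}(i_*(E|_Y))\le \nu^+_{\alpha,0,H}(i_*(E|_Y))\le \max(\nu_1,\nu_2).
\end{equation*}
Finally I would transport the extremal $H_Y$-slopes of $E|_Y$ into the heart: if $G\subseteq E|_Y$ is the maximal destabilizing subsheaf, then $i_*G\subseteq i_*(E|_Y)$ is a subobject with $\mu^+_{H_Y}(E|_Y)-\tfrac m2=\nu_{\alpha,0,H}(i_*G)\le\nu^+(i_*(E|_Y))\le\max(\nu_1,\nu_2)$, and dually the minimal destabilizing quotient $E|_Y\twoheadrightarrow Q$ gives $\mu^-_{H_Y}(E|_Y)-\tfrac m2\ge\min(\nu_1,\nu_2)$. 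This is exactly the claimed containment, and $\nu_1=\nu_2$ forces $\mu^-_{H_Y}=\mu^+_{H_Y}$, i.e. $H_Y$-semistability. As a byproduct the same argument rules out torsion in $E|_Y$, since a torsion subsheaf would push forward to a subobject of tilt slope $+\infty$, contradicting the finite upper bound.

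I expect the main points requiring care to be, first, the bookkeeping showing that $i_*(E|_Y)$ genuinely lies in the heart and that $i_*$ is exact into $\Coh^{0,H}(X)$, so that maximal/minimal destabilizers transfer to honest sub- and quotient objects; and second, the slope identity $\nu_{\alpha,0,H}(i_*G)=\mu_{H_Y}(G)-\tfrac m2$, which is the bridge between the two stability notions and is precisely where the shift $\tfrac m2$, hence the exact endpoints of the interval, originates.
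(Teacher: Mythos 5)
Your proposal is correct and follows essentially the same route as the paper: push $E|_Y$ forward via the exact sequence $0\to E(-mH)\to E\to \iota_*(E|_Y)\to 0$, view $\iota_*(E|_Y)$ as an extension of the two tilt-semistable objects in $\Coh^{0,H}(X)$, and use the slope identity $\nu_{\alpha,0,H}(\iota_*G)=\mu_{H_Y}(G)-\tfrac{m}{2}$ to bound the slopes of destabilizing sub- and quotient sheaves of $E|_Y$. The only cosmetic difference is that the paper first deduces reflexivity of $E$ (from $\Hom(T,E(-mH)[1])=0$ for codimension $\geq 2$ torsion sheaves $T$) to guarantee purity of $E|_Y$ up front, whereas you recover purity at the end from the finiteness of the tilt-slope bound; both are valid.
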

\begin{proof}
    Since $E(-mH)[1]$ is $\nu_{\alpha,0,H}$-tilt semistable, for any torsion sheaf $T$ supported on a variety with codimension not less than $2$, we have $\text{Hom}(T,E(-mH)[1]) = 0$. In particular, $E$ is a reflexive sheaf. 
    For a smooth irreducible $Y\in |mH|$, the restricted sheaf $E|_Y$ is a pure sheaf on $Y$. In addition, $\rk(E) = \rk (E|_Y)$, $H^{n-2}_Y \ch{1}(E|_Y) = mH^{n-1}\ch{1}(E)$.

    Denote the embedding by $\iota:Y\hookrightarrow X$. Suppose there is a subobject $F\hookrightarrow E|_Y$ in $\Coh(Y)$ such that $\mu_{H_Y}(F) > \tfrac{m}{2} + \max\{\nu_{\alpha,0,H}(E), \nu_{\alpha,0,H}(E(-mH)[1]) \}$, then
    \begin{align*}
         \nu_{\alpha,0,H}(\iota_*(F)) &= \frac{H^{n-2}\ch{2}(\iota_*(F))}{H^{n-1}\ch{1}(\iota_*(F))} = \frac{H^{n-2}_Y\ch{1}(F) - \frac{1}{2}mH^{n-1}_Y\rk(F)}{H^{n-1}_Y \rk(F)} \\
        &= \mu_{H_Y}(F) - \tfrac{m}{2} > \max\{\nu_{\alpha,0,H}(E), \nu_{\alpha,0,H}(E(-mH)[1]) \}.
    \end{align*}
    However, the object $\iota(E|_Y)$ is the extension of $E$ by $E(-mH)[1]$ in $\Coh^{0,H}(X)$, and any subobject of $\iota_*(E|_Y)$ has $\nu_{\alpha,0,H}$-slope less than or equal to $\max\{\nu_{\alpha,0,H}(E), \nu_{\alpha,0,H}(E(-mH)[1]) \}$, which gives the contradiction. In the same way, we may argue for any quotient $E|_Y\rightarrow G$, we have 
    \begin{equation*}
        \mu_{H_Y}(G)\geq \tfrac{m}{2}+ \min\{\nu_{\alpha,0,H}(E), \nu_{\alpha,0,H}(E(-mH)[1]) \}.
    \end{equation*}
    Therefore, 
    \begin{align*}
        [\mu_{H_Y}^-(E|_Y),\mu^+_{H_Y}(E|_Y)]\subseteq \llbracket \tfrac{m}{2}+\nu_{\alpha,0,H}(E), \tfrac{m}{2}+\nu_{\alpha,0,H}(E(-mH)[1]) \rrbracket.
    \end{align*}
\end{proof}

\begin{remark}
    Compared to \cite{feyzbakhshEffectiveRestrictionTheorem2022} and  \cite[Lemma 5.1]{li_stability_2019}, the above lemma drops the assumption on the tilt-slope of $E$ and $E(-mH)$. While the restricted sheaf is not necessarily semistable, the slope of Harder--Narasimhan factors can be controlled. In practice, this allows us to restrict to $|H|$, instead of $|2H|$.
\end{remark}

The following formulae will be useful
\begin{align}
   & \nu_{0,0,H}(F)  = \frac{\xi_{H}(F)}{\mu_H(F)}; & 
    \nu_{0,0,H}(F^\vee) = - \frac{\xi_{H}(F)}{\mu_H(F)};\notag \\
  &  \nu_{0,0,H}(F(-H)) + \tfrac{1}{2} = \frac{\xi_H(F)- \frac{1}{2}\mu_H(F)}{\mu_H(F) - 1}; 
   & \nu_{0,0,H}(F^\vee(H)) + \tfrac{1}{2} = \frac{\xi_H(F)- \frac{1}{2}}{1-\mu_H(F)} + \tfrac{3}{2}. \label{eqnu}
\end{align} 
Also, when $\xi_H(F) \leq \mu_H^2(F) - \frac{1}{2}\mu_H(F)$, we have $\nu_{0,0,H}(F)\leq \nu_{0,0,H}(F(-H))$.

As in \cite{Naoki23_BG_hypersurfaces}, we use the following terminology:
\begin{definition}\label{dfnStarShape}
    Fix an integer $d\geq 0$. We say that a function $f:(0,1)\rightarrow\mathbb{R}$ is \textit{star-shaped along the line} $\beta = d$ if the following condition holds: for every real number $t\in(0,1)$, the line segment connecting the point $(t,f(t))$ and the point $(d,d^2/2)$ is above the graph of $f$.
\end{definition}

We will use the following variant of \cite[Lemma 3.3]{Naoki23_BG_hypersurfaces}, which allows us to assume $F$ is everywhere stable:
\begin{lemma}\label{LemmaReductionToStableObjects}
    Let $d\geq 1$ be an integer, and $f:(0,1)\rightarrow \mathbb{R}$ be a star-shaped function along the lines $\beta = 0, d$, satisfying
    \begin{equation*}
         f(t) \leq \tfrac{1}{2}t^2
    \end{equation*}
    for every $t\in (0,1)$. Assume that there exists objects $F'\in \Db(X)$ satisfying the following conditions:
    \begin{enumerate}[{\ \ }(a)]
        \item $F'$ is either $\nu_{0,\alpha}$-tilt-semistable for some $\alpha > 0$, or $\nu_{d,\alpha'}$-tilt-semistable for some $\alpha' > d^2/2$.
        \item  $\mu_H(F')\in (0,1)$, $\xi_H(F') > f(\mu_H(F'))$.
    \end{enumerate}
    Then we can choose such an object $F$ so that:
    \begin{enumerate}
        \item  $F$ is a reflexive coherent sheaf.
        \item $F\in\Coh^{0,H}(X)$ is $\nu_{\alpha,0,H}$-tilt-stable for any $\alpha > 0$.
        \item $F[1] \in \Coh^{d,H}(X)$ is $\nu_{\alpha',d,H}$-tilt-stable for any $\alpha' > d^2/2$.
    \end{enumerate}
\end{lemma}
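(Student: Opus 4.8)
The plan is to run a minimal-discriminant descent argument, adapting \cite[Lemma 3.3]{Naoki23_BG_hypersurfaces} to the present situation where two lines $\beta=0$ and $\beta=d$ must be controlled simultaneously. Let $\mathcal{S}\subset\Db(X)$ be the class of all objects satisfying hypotheses (a) and (b). Every $F'\in\mathcal{S}$ is tilt-semistable for some stability condition, so the BG inequality for tilt-stability gives $\overline{\Delta}_H(F')\geq 0$; moreover $\overline{\Delta}_H$ takes values in a fixed discrete subset of $\mathbb{R}_{\geq 0}$, since $H^n\ch{0}$, $H^{n-1}\ch{1}$ and $H^{n-2}\ch{2}$ pair into lattices with bounded denominators. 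Hence I may choose $F\in\mathcal{S}$ with $\overline{\Delta}_H(F)$ minimal, and (replacing $F$ by a shift if needed) assume $\ch{0}(F)>0$. I claim this minimal $F$ already satisfies conclusions (1)--(3).

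The core is the descent step. Suppose $F$ is strictly $\nu_{\alpha_1,0,H}$-tilt-semistable for some $\alpha_1>0$ (the analysis along $\beta=d$, applied to $F[1]\in\Coh^{d,H}(X)$, is identical after translating the centre). Let $F_1,\dots,F_k$ be the Jordan--Hölder factors at $(0,\alpha_1)$. By part (b$'$) of \Cref{StructureOfWalls} the points $p_H(F_i)$ are collinear with $p_H(F)$ on a line $\ell$ through the wall centre $(0,\alpha_1)$, and $\overline{v}_H(F)=\sum_i\overline{v}_H(F_i)$. The key observation is a centre-raising property of the star-shaped hypothesis: since $f$ is star-shaped along $\beta=0$ with centre $(0,0)$ and $\alpha_1\geq 0$, each connecting segment is only lifted when the centre is raised vertically to $(0,\alpha_1)$, so $f$ is still star-shaped with respect to $(0,\alpha_1)$; likewise along $\beta=d$, as the centre $(d,d^2/2)$ is raised to $(d,\alpha')$ with $\alpha'\geq d^2/2$. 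Combined with the parabola bound $f(t)\leq\tfrac12 t^2$, which confines every tilt-semistable $p_H(F_i)$ to lie on or below $y=\tfrac12 x^2$, this forces at least one factor $F_{i_0}$ to have $\mu_H(F_{i_0})\in(0,1)$ and $\xi_H(F_{i_0})>f(\mu_H(F_{i_0}))$, i.e. to satisfy (b). As $F_{i_0}$ is itself $\nu_{\alpha_1,0,H}$-tilt-semistable it satisfies (a), so $F_{i_0}\in\mathcal{S}$; and by \Cref{LemmaDiscriminant} we have $\overline{\Delta}_H(F_{i_0})\leq\overline{\Delta}_H(F)$, with equality excluded exactly as in \Cref{RemarkOnDiscriminats} (equality would make the factors destabilize $F$ at every stability condition). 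This contradicts minimality.

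Consequently the minimal $F$ is not strictly tilt-semistable at any point of the ray $\{(0,\alpha):\alpha>0\}$. As it is tilt-semistable somewhere on this ray, openness of stability and the wall structure of \Cref{StructureOfWalls} force $F$ to be $\nu_{\alpha,0,H}$-tilt-stable for every $\alpha>0$; letting $\alpha\to\infty$ and invoking \Cref{LargeVolumeLimit} identifies $F$ with an $H$-slope-stable torsion-free sheaf of slope $\mu_H(F)\in(0,1)$, giving (2). Stability for $\alpha\gg 0$ also rules out any torsion subsheaf, so $F$ is torsion-free; its reflexive hull $F^{\vee\vee}$ has the same $\ch{0},\ch{1}$ and $H^{n-2}\ch{2}(F^{\vee\vee})\geq H^{n-2}\ch{2}(F)$, whence $\overline{\Delta}_H(F^{\vee\vee})\leq\overline{\Delta}_H(F)$ and $\xi_H(F^{\vee\vee})\geq\xi_H(F)>f(\mu_H(F))$. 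Since reflexivization preserves slope-stability, $F^{\vee\vee}\in\mathcal{S}$ is again of minimal discriminant and is reflexive, so replacing $F$ by $F^{\vee\vee}$ secures (1). Finally, as $\mu^+_H(F)=\mu_H(F)<1\leq d$ we have $F[1]\in\Coh^{d,H}(X)$, and the identical descent argument along $\{(d,\alpha'):\alpha'>d^2/2\}$, now using star-shapedness along $\beta=d$, yields (3).

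The step I expect to be the main obstacle is the geometric claim that some Jordan--Hölder factor remains in the forbidden region $\{\mu\in(0,1),\ \xi>f\}$. Because objects of the tilted heart may have ranks of either sign, $p_H(F)$ is only an \emph{affine} (not convex) combination of the $p_H(F_i)$, so one must argue carefully that the factor singled out by the star-shaped property does not escape the strip $0<\mu<1$ nor drop below the graph of $f$. Here the centre-raising observation, the parabola bound $f\leq\tfrac12 x^2$, and collinearity through $(0,\alpha_1)$ must be used in tandem; making these interact correctly — and verifying the equality case of \Cref{LemmaDiscriminant} so that the descent strictly decreases the discriminant — is the delicate part of the argument.
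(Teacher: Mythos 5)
Your overall strategy --- choosing $F$ of minimal discriminant in the class $\mathcal{S}$, running a Jordan--H\"older descent at walls using the star-shaped hypothesis together with \Cref{LemmaDiscriminant} and \Cref{RemarkOnDiscriminats} --- is the same as the paper's, and your ``centre-raising'' observation correctly explains why star-shapedness with centre $(0,0)$ (resp.\ $(d,d^2/2)$) still applies at a wall centre $(0,\alpha_1)$ (resp.\ $(d,\alpha')$) higher up the ray. However, there is a genuine gap in the passage from conclusion (2) to conclusion (3). Hypothesis (a) only gives tilt-semistability of $F'$ at a single point of \emph{one} of the two rays. Your descent shows that the minimal $F$ cannot be \emph{strictly} semistable at any point of either ray; combined with semistability at one point of, say, $\beta=0$, this yields stability along all of $\beta=0$. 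It does \emph{not} show that $F[1]$ is semistable anywhere on the ray $\beta=d$: an object can be tilt-unstable (with nontrivial HN filtration) at every point of that ray without ever being strictly semistable there, so ``the identical descent argument along $\{(d,\alpha')\}$'' has no starting point. To carry stability from one ray to the other one must cross the vertical wall $\beta_0=\mu_H(F)$, and the walls met along the way need not touch either ray, so star-shapedness along $\beta=0,d$ gives no control over them.

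This is precisely why the paper's proof contains a separate case for the vertical wall: if $F'$ is strictly $\nu_{\alpha,\beta_0,H}$-semistable at $\beta_0=\mu_H(F')$ for some $\alpha$, one inspects the Jordan--H\"older factors in $\Coh^{\beta_0,H}$ (torsion factors have $\ch{2}\geq 0$; the remaining factors have rank of a fixed sign and slope $\mu_H(F')$), extracts a factor $F'_i$ with $\mu_H(F'_i)=\mu_H(F')$, $\xi_H(F'_i)\geq\xi_H(F')$ and $\overline{\Delta}_H(F'_i)\leq\overline{\Delta}_H(F')$, and then uses openness and Bertram's nested wall theorem (\Cref{StructureOfWalls}) to conclude that $F'_i$ is tilt-stable for $\alpha\gg 0$ at \emph{both} $\beta=0$ and $\beta=d$; only then can the descent be run on each ray separately. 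Without this step (or an equivalent transfer argument, e.g.\ showing directly that a slope-stable reflexive sheaf $F$ with $\mu_H(F)<d$ has $F[1]$ tilt-semistable at $(d,\alpha')$ for $\alpha'\gg 0$ --- which does not follow from \Cref{LargeVolumeLimit} as you cite it, since that lemma goes in the opposite direction), conclusion (3) is not established. Your remaining steps (reflexive hull, identification of $F$ as a sheaf in the large volume limit) are reasonable, though the sheaf-ness of an object that is tilt-stable for $\alpha\gg 0$ is again the converse of \Cref{LargeVolumeLimit} and needs its own justification; and, as you yourself flag, the existence of a Jordan--H\"older factor remaining in the region $\{\mu_H\in(0,1),\ \xi_H>f\}$ is asserted rather than proved, which matches the level of detail in the paper but is not resolved by your sketch.
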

\begin{proof}
    Take such an $F'$. 
    Since the value of discriminant is a nonnegative integer, we may assume that $F'$ is an object with the minimal $\overline{\Delta}_H$ of all such objects. Suppose $F'$ becomes strictly $\nu_{\alpha,0,H}$-tilt-semistable for some $\alpha > 0$ (or strictly $\nu_{\alpha',1,H}$-tilt-semistable), then we consider the Jordan--H\"older filtration of $F'$. Since $f$ is star-shaped along the line $\beta = 0$ and $\beta = d$, there is at least one Jordan--H\"older factor $F'_i$ with $\mu_H(F'_i)\in(0,1)$ also satisfying $\xi_H(F'_i) > f(\mu_H(F'_i))$. By \Cref{RemarkOnDiscriminats}, we have $\overline{\Delta}_H(F') > 0$. Hence $\overline{\Delta}_H(F'_i) < \overline{\Delta}_H(F')$ by \Cref{LemmaDiscriminant}, and this violates the minimum assumption on $\overline{\Delta}_H(F')$. As a consequence, we may assume $F'$ is $\nu_{\alpha,0,H}$-tilt-stable (or $\nu_{\alpha',1,H}$-tilt-stable) for any $\alpha > 0$ (or $\alpha'>\frac{1}{2}$).

    If $F'$ becomes strictly $\nu_{\alpha,\beta_0,H}$-tilt-semistable at the vertical wall for $\beta_0 = \mu_H(F')$ and some $\alpha > \frac{\beta_0^2}{2}$, we may assume that $F'\in\Coh^{\beta_0,H}(S)$. Then each torsion Jordan--H\"older factor of $F'$ has $\ch{2}\geq 0$. Since for any other Jordan--H\"older factor $F'_j$ we have $0>\rk(F'_j) \geq \rk(F')$, there exists a factor $F'_i$ with $\mu_H('F_i) = \mu_H(F')$ and $\xi_H(F'_i) \geq \xi_H(F')$. In particular, the object $F'_i$ also satisfies $\xi_{H}(F'_i) > f(\mu_H(F'_i))$. and $\overline{\Delta}_H(F'_i)\leq \overline{\Delta}_H(F')$. By \Cref{StructureOfWalls}, $F'_i$ is tilt-stable in a neighborhood of $(\beta_0,\alpha)$, and hence $\nu_{\alpha,0,H}$-tilt-stable and $\nu_{\alpha,1,H}$-tilt-stable for $\alpha \gg 0$. Since $\overline{\Delta_H}(F')$ is minimal, $\overline{\Delta}_H(F') = \overline{\Delta}_H(F'_i)$, then by the previous argument, $F'_i$ is $\nu_{\alpha,0,H}$-stable for all $\alpha > 0 $ and $\nu_{\alpha',1,H}$-stable for all $\alpha' > \frac{1}{2}$. Take $F$ to be $F'_i$.  As a consequence, we may now assume $F$ is $\nu_{\alpha,0,H}$-tilt-stable for any $\alpha > 0$ and $\nu_{\alpha',d,H}$-tilt-stable for any $\alpha' > d^2/2$. Since $F$ is $\nu_{\alpha,0,H}$-tilt-stable for $\alpha \gg 0$, the object $F$ is a coherent sheaf. And the other statement follows.
\end{proof}

\subsection{Inequality on quintic surfaces}
We next deduce a stronger Bogomolov--Gieseker type inequality for quintic surfaces, using the Clifford-type bounds from \Cref{section3}.

\begin{figure}[H]
    \centering
    \begin{tikzpicture}[line cap=round,line join=round,x=10cm,y=10cm]
  \begin{axis}[
    axis lines=middle,
    xmin=0, xmax=1.2,
    ymin=-0.35, ymax=0.65,
    xlabel={$\mu_H$},
    ylabel = {$\xi_H$},
    ylabel style={font=\Large},
    xtick={13/20, 40/47, 1},
    xticklabels={$\frac{13}{20}$, $\frac{40}{47}$, $1$},
    extra x ticks={7/47, 7/20},
    extra x tick labels=\empty,
    ytick={-3/20,0.5},
    yticklabels={$-\frac{3}{20}$, $\frac{1}{2}$},
    width= 15cm, height = 12cm
  ]
    \addplot[red,thick,domain=0:1]{0.5*x*x};
    \node[font=\Large,red] at (axis cs:0.65,0.32) {$y=\frac{1}{2}x^2$};
    
    \addplot[thick,domain=0:7/47]{13/20*x-3/20};
    \addplot[thick,domain=7/47:7/20]{-5/14*x};
    \addplot[thick,domain=7/20:13/20]{0.5*x-0.3};
    \addplot[thick,domain=13/20:40/47]{19/14*x - 6/7};
    \addplot[thick,domain=40/47:1]{7/20*x};
    \node[anchor=south,font=\small] at (axis cs:7/46,0.01) {$\frac{7}{47}$};
    \node[anchor=south,font=\small] at (axis cs:7/20,0) {$\frac{7}{20}$};
    
    \draw[thick,fill=white] (axis cs:2,5) circle (1.5pt);
    \filldraw (axis cs:2,6) circle (1.5pt);

    \draw [line width=1pt,dash pattern=on 1pt off 5pt] (1,0)-- (1,0.5);
    \draw [line width=1pt,dash pattern=on 1pt off 5pt] (0,0.5)-- (1,0.5);
    \draw [line width=1pt,dash pattern=on 1pt off 5pt] (7/47,0)-- (7/47,-5/92);
    \draw [line width=1pt,dash pattern=on 1pt off 5pt] (7/20,0)-- (7/20,-1/8);
    \draw [line width=1pt,dash pattern=on 1pt off 3pt] (13/20,0)-- (13/20,1/40);
    \draw [line width=1pt,dash pattern=on 1pt off 5pt] (40/47,0)-- (40/47,27/92);
  \end{axis}
\end{tikzpicture}

    \caption{BG type inequality on quintic surfaces}
    \label{fig:BGsurface}
\end{figure}
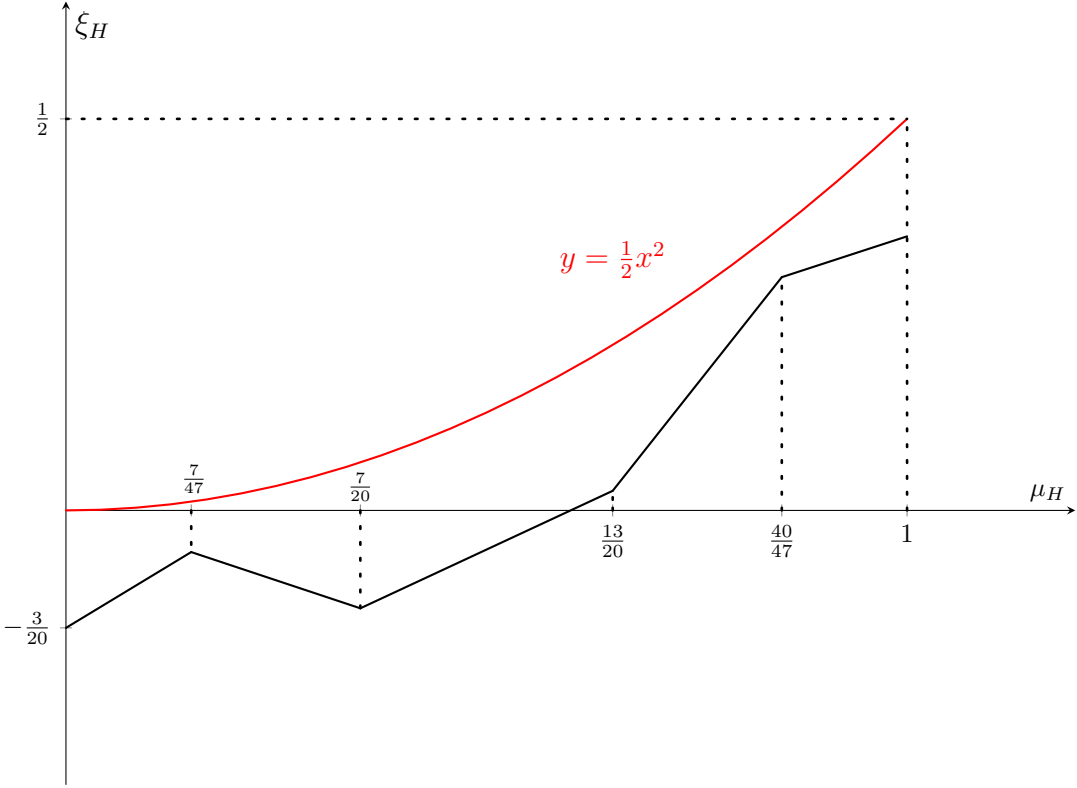

\begin{proposition}\label{ChInequalityOnSurface}
    Let $S_5\subset \mathbb{P}^3$ be a smooth irreducible quintic surface, $H = c_1(\mathcal{O}_{S_5}(1))$, and let $F$ be an object in $\Db(S_5)$ such that $\mu_H(F)\in (0,1)$. Suppose $F$ is $\nu_{\alpha,0,H}$-tilt-stable (or $\nu_{\alpha',1,H}$-tilt-stable) for some $\alpha > 0$ (resp. or $\alpha' > \frac{1}{2}$), then
    \begin{equation}\label{Ch1Ch2Inequal}
        \xi_H(F)\leq 
    \begin{cases}
    \frac{13}{20}\mu_H(F) - \frac{3}{20} & \text{when } 0 < \mu_H(F) \leq \frac{7}{47}; \\[0ex]
    -\frac{5}{14}\mu_H(F)  & \text{when } \frac{7}{47} \leq  \mu_H(F)\leq \frac{7}{20}; \\[0ex]
    \frac{1}{2}\mu_H(F)-\frac{3}{10} & \text{when } \frac{7}{20}\leq \mu_H(F) \leq \frac{13}{20}; \\[0ex]
    \frac{19}{14}\mu_H(F) - \frac{6}{7} & \text{when } \frac{13}{20}\leq \mu_H(F)< \frac{40}{47}; \\[0ex]
    \frac{7}{20}\mu_H(F) & \text{when } \frac{40}{47}\leq \mu_H(F)< 1.
\end{cases}
    \end{equation}
\end{proposition}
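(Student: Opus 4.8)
The plan is to reduce the statement to the Clifford-type bounds on a plane quintic curve $C\in|H|$, to which we gain access through the restriction theorem, and to extract the piecewise bound by a case analysis driven by the breakpoints of the curve estimate.

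First I would set up the reductions. Let $f\colon(0,1)\to\R$ be the piecewise linear function on the right-hand side of \eqref{Ch1Ch2Inequal}. Arguing by contradiction, suppose some $F$ as in the statement violates the bound. The hypotheses of \Cref{LemmaReductionToStableObjects} (with $d=1$) require that $f$ be star-shaped along the lines $\beta=0,1$ and satisfy $f(t)\le\tfrac12t^2$, both of which one checks directly from the explicit slopes and breakpoints; it then provides a counterexample that is in addition a reflexive — hence, on a smooth surface, locally free — sheaf, tilt-stable along the whole lines $\beta=0$ and $\beta=1$. Moreover $f$ enjoys the symmetry $f(1-t)=f(t)-t+\tfrac12$, which is exactly the transformation of $(\mu_H,\xi_H)$ under $F\mapsto F^\vee(H)$; since this operation also interchanges tilt-stability on the two lines, it suffices to treat $\mu_H(F)\in(0,\tfrac12]$, the complementary range following by duality.

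Next I would restrict to a smooth $C\in|H|$, a plane quintic curve of genus $6$ with $K_C=2H_C$, so that $\mu_{H_C}$ ranges over $[0,2]$ precisely on the special range of \Cref{CliffordType}. Applying \Cref{RestrictionTheorem} with $m=1$ to $F$ — using that $F$ and $F(-H)[1]$ are both tilt-stable along $\beta=0$ — controls the Harder–Narasimhan slopes of $F|_C$, and the formulae \eqref{eqnu} express the endpoints of this slope interval as explicit linear-fractional functions of $(\mu_H(F),\xi_H(F))$; the same argument applied to $F^\vee(H)$ controls $F^\vee(H)|_C$. On the cohomological side, \Cref{RRQuinticSurface} gives $\chi(S_5,F)=5\rk(F)\bigl(\xi_H(F)-\tfrac12\mu_H(F)+1\bigr)$, so an upper bound for $\xi_H(F)$ is equivalent to one for $\chi$. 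Since $F$ is slope-stable with $0<\mu_H(F)<1$, both $h^0(S_5,F(-H))$ and $h^0(S_5,F^\vee)$ vanish; hence the sequence $0\to F(-H)\to F\to F|_C\to 0$ yields $h^0(S_5,F)\le h^0(C,F|_C)$, and Serre duality (using $K_{S_5}=H$) together with the twist $0\to F^\vee\to F^\vee(H)\to F^\vee(H)|_C\to 0$ yields $h^2(S_5,F)=h^0(S_5,F^\vee(H))\le h^0(C,F^\vee(H)|_C)$. Combining with $\chi\le h^0+h^2$ gives the master inequality
\[
5\rk(F)\bigl(\xi_H(F)-\tfrac12\mu_H(F)+1\bigr)\le h^0(C,F|_C)+h^0(C,F^\vee(H)|_C).
\]

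Finally I would bound the two right-hand terms by \Cref{lem:convexfunction} together with \Cref{H0Bound} and \Cref{H0Bound1}, feeding in the Harder–Narasimhan slope intervals from the previous step; dividing by $\rk(F)$ turns the display into an inequality in the two unknowns $\mu_H(F),\xi_H(F)$, which one solves for $\xi_H(F)$. Each linear piece of the claimed bound corresponds to a regime in which the controlled slope intervals of $F|_C$ and $F^\vee(H)|_C$ occupy prescribed positions relative to the breakpoints $6/7,1,8/7$ of the Clifford function $h$, and the interior breakpoints $7/46,7/20,13/20,39/46$ of \eqref{Ch1Ch2Inequal} arise precisely as the values of $\mu_H(F)$ at which one of these intervals meets a breakpoint of $h$. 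In the central range both restrictions are estimated by the branch $1+\tfrac32\mu_{H_C}$ of $h$ and the computation collapses to $\xi_H(F)\le\tfrac12\mu_H(F)-\tfrac3{10}$, while the outer ranges are governed by the steeper branches. The hard part will be exactly this case analysis: because the restriction theorem pins the slopes of $F|_C$ only to an interval, one must optimize the Clifford bound over all admissible Harder–Narasimhan configurations inside that interval — which is where the convex-roof construction $\overline{h}$ of \Cref{lem:convexfunction} enters — then verify on each subinterval that the linear bound claimed is consistent with the slope positions it presupposes, keep the two restrictions' contributions correctly coupled, and check continuity at the breakpoints.
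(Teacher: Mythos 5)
Your overall skeleton matches the paper's proof: argue by contradiction, reduce via \Cref{LemmaReductionToStableObjects} with $d=1$ to an everywhere tilt-stable reflexive sheaf, use the duality $F\mapsto F^\vee(H)$ (under which $(\mu_H,\xi_H)\mapsto(1-\mu_H,\xi_H-\mu_H+\tfrac12)$, matching the symmetry of $f$) to reduce to $\mu_H(F)\in(0,\tfrac12]$, restrict to a plane quintic $C\in|H|$ via \Cref{RestrictionTheorem} with $m=1$, and combine Riemann--Roch, Serre duality and the restriction sequences to bound $\chi(\mathcal{O}_{S_5},F)$ by $h^0(C,F|_C)+h^0(C,F^\vee(H)|_C)$. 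On the range $\tfrac{7}{46}\le\mu_H(F)\le\tfrac12$ this is exactly the paper's estimate \eqref{RREstimate}: both restricted sheaves have Harder--Narasimhan slopes pinned into $[0,\tfrac67]$, \Cref{H0Bound} applies, and the display collapses to $\xi_H(F)\le\tfrac12\mu_H(F)-\tfrac3{10}$.

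The gap is in the outer range $0<\mu_H(F)\le\tfrac{7}{46}$ (and its dual $[\tfrac{39}{46},1)$), where your single master inequality cannot produce the piece $\tfrac{17}{26}\mu_H-\tfrac2{13}$. The restriction theorem only bounds $\mu^+_{H_C}(F^\vee(H)|_C)$ by $\max\bigl\{\tfrac{\xi_H-\frac12}{1-\mu_H}+\tfrac32,\ \tfrac12-\tfrac{\xi_H}{\mu_H}\bigr\}$, and the second term blows up as $\mu_H\to0$: the violation hypothesis only gives $\xi_H>\tfrac{17}{26}\mu_H-\tfrac2{13}$, so $\xi_H$ may be close to $-\tfrac2{13}$ while $\mu_H$ is tiny, making $\tfrac12-\tfrac{\xi_H}{\mu_H}$ of order $\tfrac{2}{13\mu_H}$. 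The admissible HN configurations of $F^\vee(H)|_C$ then reach far beyond the breakpoint $\tfrac87$ (and eventually beyond $2$, where \Cref{H0Bound1} no longer applies at all), so the Clifford input degrades and the resulting bound on $\xi_H$ is far weaker than claimed — no optimization over HN configurations inside your fixed framework recovers it. The paper's proof makes a genuinely different move here: it estimates $\chi(\mathcal{O}_{S_5},F(H))=\ch{2}(F)+\tfrac12H\ch{1}(F)+H^2\ch{0}(F)$ by $h^0(C,F(H)|_C)+h^0(C,F^\vee|_C)$, i.e.\ it re-pairs the twists so that the two restricted sheaves have slopes near $1+\mu_H$ and near $0$, both of which the violation hypothesis now pushes in the helpful direction ($\mu^+_{H_C}(F|_C)\le\tfrac2{13}$, $\mu^+_{H_C}(F(H)|_C)\le\tfrac{15}{13}$), so that \Cref{H0Bound1} applies to each. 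You need to add this change of Euler characteristic for the outer ranges; as written, the first and last pieces of \eqref{Ch1Ch2Inequal} are out of reach of your plan.
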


\begin{proof}
    We argue by contradiction. Suppose there is a $\nu_{\alpha,0,H}$-tilt-stable or $\nu_{\alpha', 1, H}$-tilt-stable object $F$ with $\mu_H(F)\in (0,1)$ violating the above inequality. Define $f_{S_5}: [0,1]\rightarrow \mathbb{R}$ as that on the right hand side of \eqref{Ch1Ch2Inequal}, viewing $\mu_H(F)$ as the variable: 
    \begin{equation*}
        f_{S_{5}}(x) =  
    \begin{cases}
    \frac{13}{20}x - \frac{3}{20} & \text{when } 0 < x \leq \frac{7}{47}; \\
    -\frac{5}{14}x  & \text{when } \frac{7}{47} \leq  x \leq \frac{7}{20}; \\
    \frac{1}{2}x-\frac{3}{10} & \text{when } \frac{7}{20}\leq x \leq \frac{13}{20}; \\
    \frac{19}{14}x - \frac{6}{7} & \text{when } \frac{13}{20}\leq x < \frac{40}{47}; \\
    \frac{7}{20}x & \text{when } \frac{40}{47}\leq x < 1.
\end{cases}
    \end{equation*}
    It is direct to check that $f_{S_5}$ and $F$ satisfy the assumption of \Cref{LemmaReductionToStableObjects} 
    for $d = 1$. By \Cref{LemmaReductionToStableObjects}, we may assume that $F$ is a coherent sheaf. Moreover, $F\in\Coh^{0,H}(S_5)$ is $\nu_{\alpha,0,H}$-stable for any $\alpha > 0$, and that $F[1] \in \Coh^{1,H}(X)$ is $\nu_{\alpha',1,H}$-tilt-stable for any $\alpha' > 1/2$. 

    Without loss of generality, we may assume $\mu_H(F) \in (0,\frac{1}{2}]$, since otherwise we might replace $F$ by $F^{\vee}(H)$. 
    
    By \Cref{RestrictionTheorem}, we might take $C\in|H|$, such that for any $\alpha > 0$, the restricted sheaf $F|_{C}$ has 
    \begin{align*}
        [\mu_{H_C}^-(F|_C),\mu^+_{H_C}(F|_C)]\subseteq \llbracket \tfrac{1}{2}+\nu_{\alpha,0,H}(F), \tfrac{1}{2}+\nu_{\alpha,0,H}(F(-H)[1]) \rrbracket.
    \end{align*} 
    Letting $\alpha\rightarrow 0$, this spells as
    \begin{align*}
        [\mu_{H_C}^-(F|_C),\mu^+_{H_C}(F|_C)]\subseteq \llbracket \tfrac{1}{2}+\nu_{0,0,H}(F), \tfrac{1}{2}+\nu_{0,0,H}(F(-H)[1]) \rrbracket.
    \end{align*}
    The same argument applied to $F^\vee(H)$, we have
    \begin{equation*}
        [\mu_{H_C}^-(F^\vee(H)|_C),\mu^+_{H_C}(F^\vee(H)|_C)]\subseteq \llbracket \tfrac{1}{2}+\nu_{0,0,H}(F^\vee(H)), \tfrac{1}{2}+\nu_{0,0,H}(F^\vee[1]) \rrbracket.
    \end{equation*}
    We firstly consider the range $\tfrac{7}{47}\leq\mu_H(F)\leq\tfrac{1}{2}$. Note that $\Hom(\mathcal{O}_{S_5}, F(-H)) = 0$ and $\Hom(\mathcal{O}_{S_5},F^\vee) = 0$, by \Cref{RRQuinticSurface}, we have
    \begin{align}
        &\ch{2}(F) - \tfrac{1}{2}H\ch{1}(F)+ H^2\ch{0}(F) = \chi(\mathcal{O}_{S_5}, F)\notag \\
        \leq & \hom(\mathcal{O}_{S_5},F) + \hom(\mathcal{O}_{S_5},F[2]) = \hom(\mathcal{O}_{S_5},F) + \hom(\mathcal{O}_{S_5},F^\vee(H)) \notag\\
        \leq &\hom(\mathcal{O}_{C_5}, F|_{C_5}) + \hom (\mathcal{O}_{C_5},F^\vee(H)|_{C_5}). \label{RREstimate}
    \end{align}
    
By our assumption on $F$ and \eqref{eqnu}, we have 
\begin{align*}
    \mu^+_{H_{C}}(F|_C) &\leq \tfrac{1}{2} + \max\{\nu_{0,0,H}(F), \nu_{0,0,H}(F(-H)[1]) \} \\
                        & = \max\left\{\frac{\xi_{H}(F)}{\mu_H(F)} + \frac{1}{2}, \frac{\xi_H(F)- \tfrac{1}{2}\mu_H(F)}{\mu_H(F) - 1}\right\} \leq \frac{6}{7},
\end{align*}
and
\begin{align*}
    \mu^+_{H_{C}}(F^\vee(H)|_C) &\leq \tfrac{1}{2} + \max\{\nu_{0,0,H}(F^\vee(H)), \nu_{0,0,H}(F^\vee[1])\} \\
                                & = \max\left\{\frac{\xi_H(F)- \frac{1}{2}}{1-\mu_H(F)} + \frac{3}{2}, \frac{1}{2} - \frac{\xi_{H}(F)}{\mu_H(F)} \right\} \leq \frac{6}{7}.
\end{align*}

Then by \Cref{H0Bound}, the last term in  \eqref{RREstimate} is
\begin{align*}
    \leq \rk(F)\cdot\left(1+ \tfrac{3}{2}\mu_H(F) + 1 + \tfrac{3}{2}(1 - \mu_H(F))\right).
\end{align*}
This in turn gives:
\begin{equation}
    \ch{2}(F) \leq \tfrac{1}{2} H\ch{1}(F) -\tfrac{3}{10}H^2\rk(F).\notag
\end{equation}
And this violates our assumption on $F$ when $\tfrac{7}{47}\leq\mu_H(F)\leq\tfrac{1}{2}$.

When $0 < \mu_H(F) \leq \tfrac{7}{47}$, the condition $\xi_H(F) > \tfrac{13}{20}\mu_H(F) - \tfrac{3}{20}$  implies that $\mu^+_{H_C}(F|_C) \leq \tfrac{3}{20}$, and  $\mu^+_{H_C}(F(H)|_C) \leq \frac{23}{20}$. Then by \Cref{H0Bound1}
\begin{align*}
        \chi(\mathcal{O}_{S_5}, F(H)) &= \ch{2}(F) + \tfrac{1}{2}H\ch{1}(F)+ H^2\ch{0}(F)  \\
        &\leq\hom(\mathcal{O}_{S_5},F(H)) + \hom(\mathcal{O}_{S_5},F(H)[2]) = \hom(\mathcal{O}_{S_5},F(H)) + \hom(\mathcal{O}_{S_5},F^\vee) \\
        &\leq \hom(\mathcal{O}_{C_5}, F(H)|_{C_5}) + \hom (\mathcal{O}_{C_5},F|_{C_5}) \\
        & \leq \rk(F)\cdot\left( \tfrac{7}{2}(\tfrac{3}{20} + 1) - 1  \right) + \rk(F)\cdot\left(\tfrac{3}{2}\cdot\tfrac{3}{20}+ 1\right) \\
        & = \tfrac{17}{20}H^2\ch{0}(F).
    \end{align*}
Hence $\ch{2}(F)\leq -\tfrac{1}{2}H\ch{1}(F)-\tfrac{3}{20}H^2\ch{0}(F)$. This contradicts our assumption since $\xi_H(F) > \tfrac{13}{20}\mu_H(F) - \tfrac{3}{20} > -\tfrac{1}{2}\mu_H(F)-\tfrac{3}{20}$ when $0 < \mu_H(F) \leq \tfrac{7}{47}$.
\end{proof}

\begin{corollary}\label{CorollaryOnSheaves}
    Let $F$ be a torsion-free $H$-slope-semistable sheaf on $S_5$, then the numerical Chern characters of $F$ satisfy \eqref{Ch1Ch2Inequal}.
\end{corollary}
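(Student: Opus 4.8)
The plan is to deduce the numerical inequality for an arbitrary torsion-free slope-semistable sheaf from the tilt-stable case already established in \Cref{ChInequalityOnSurface}. The bridge from slope-stability to tilt-stability is the large-volume limit statement \Cref{LargeVolumeLimit}, while the passage from stable to merely semistable is handled by an averaging argument over Jordan--H\"older factors. Throughout I write $f_{S_5}$ for the function on the right-hand side of \eqref{Ch1Ch2Inequal}, viewed as a function of $\mu_H$.

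First I would reduce to the slope-stable case. Let $F$ be torsion-free $H$-slope-semistable with $\mu_H(F)\in(0,1)$, and take a Jordan--H\"older filtration
\[
0 = F_0 \subset F_1 \subset \cdots \subset F_k = F,
\]
whose factors $G_i := F_i/F_{i-1}$ are $H$-slope-stable with $\mu_H(G_i) = \mu_H(F)$. Each $G_i$ is torsion-free: a nonzero torsion subsheaf would have slope $+\infty$, contradicting stability of $G_i$; in particular $\rk(G_i) > 0$. Since $\ch{0}$, $H\ch{1}$ and $\ch{2}$ are additive on short exact sequences and every factor shares the common slope $\mu_H(F)$, the quantity $\xi_H(F)$ is the rank-weighted average
\[
\xi_H(F) = \frac{\sum_i \rk(G_i)\,\xi_H(G_i)}{\sum_i \rk(G_i)}.
\]
The key observation is that because all $\mu_H(G_i)$ equal $\mu_H(F)$, the right-hand side of \eqref{Ch1Ch2Inequal} is evaluated at one and the same point for every factor; hence no convexity or concavity of $f_{S_5}$ is needed. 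It suffices to prove $\xi_H(G_i)\le f_{S_5}(\mu_H(F))$ for each stable factor, after which the average immediately gives $\xi_H(F)\le f_{S_5}(\mu_H(F))$.

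It therefore remains to treat a single $H$-slope-stable torsion-free sheaf $G$ with $\mu_H(G)\in(0,1)$. Since $\mu_H(G)>0$ we have $H\ch{1}^{0}(G) = H\ch{1}(G) > 0$, so \Cref{LargeVolumeLimit} (with $\beta = 0$, $n=2$) shows that $G\in\Coh^{0,H}(S_5)$ and that $G$ is $\nu_{\alpha,0,H}$-tilt-stable for $\alpha\gg 0$. Thus $G$ meets the hypotheses of \Cref{ChInequalityOnSurface}, yielding $\xi_H(G)\le f_{S_5}(\mu_H(G))$. Feeding this into the averaging identity completes the proof of \eqref{Ch1Ch2Inequal} for $F$.

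There is no serious obstacle in this argument; it is a routine transfer from tilt-stable objects to slope-semistable sheaves. The only points requiring minor care are verifying that the Jordan--H\"older factors are genuinely torsion-free of positive rank, so that \Cref{LargeVolumeLimit} applies to each of them, and observing that the averaging step needs no hypothesis on the shape of $f_{S_5}$ precisely because all factors have the common slope $\mu_H(F)$.
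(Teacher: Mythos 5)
Your proposal is correct and follows essentially the same route as the paper: reduce to slope-stable Jordan--H\"older factors, pass to tilt-stability for $\alpha\gg 0$ via \Cref{LargeVolumeLimit}, and invoke \Cref{ChInequalityOnSurface}. The extra care you take (torsion-freeness of the factors and the rank-weighted averaging at a common slope) is exactly what the paper's terse proof leaves implicit.
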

\begin{proof}
    By considering Harder--Narasimhan factors, we may assume $F$ is $H$-slope-stable. Note that $F$ is $\nu_{\alpha,0,H}$-tilt stable for $\alpha\gg 0$ by \Cref{LargeVolumeLimit}. Then we may conclude by \Cref{ChInequalityOnSurface}.
\end{proof}


\subsection{Inequality on quintic threefolds}
Now we may prove the inequality on quintic threefolds.

\begin{lemma}\label{LemmaOnChOnThreefolds}
    Let $F$ be a torsion-free sheaf on $S_5$ with $\mu_H(F)\in(0,1)$. Assume that $\frac{7}{20}\leq \mu^-_H(F)\leq \mu^+_H(F) \leq \frac{13}{20}$, then
    \begin{equation*}
        \xi_H(F) \leq \frac{1}{2}\mu_H(F) - \frac{3}{10}.
    \end{equation*}
\end{lemma}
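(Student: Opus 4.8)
The plan is to reduce to the slope-semistable case handled by \Cref{CorollaryOnSheaves} and then exploit that the target bound $\xi_H \leq \tfrac{1}{2}\mu_H - \tfrac{3}{10}$ is an \emph{affine} function of $\mu_H$, so that it survives the passage from semistable factors to $F$. First I would take the Harder--Narasimhan filtration
\[
0 = F_0 \subset F_1 \subset \cdots \subset F_k = F,
\]
whose quotients $G_i := F_i/F_{i-1}$ are $H$-slope-semistable torsion-free sheaves with strictly decreasing slopes $\mu_H(G_i)$. By hypothesis every such slope satisfies $\tfrac{7}{20} \leq \mu^-_H(F) \leq \mu_H(G_i) \leq \mu^+_H(F) \leq \tfrac{13}{20}$, so each $\mu_H(G_i)$ lands in the single interval $[\tfrac{7}{20}, \tfrac{13}{20}]$ on which the relevant branch of \eqref{Ch1Ch2Inequal} reads $\xi_H \leq \tfrac12\mu_H - \tfrac{3}{10}$.

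Next I would apply \Cref{CorollaryOnSheaves} to each semistable factor $G_i$, obtaining $\xi_H(G_i) \leq \tfrac12 \mu_H(G_i) - \tfrac{3}{10}$. Clearing the positive denominator $\rk(G_i)$, this is the affine-linear relation among additive invariants
\[
\frac{\ch{2}(G_i)}{H^2} \leq \frac{1}{2}\cdot\frac{H\ch{1}(G_i)}{H^2} - \frac{3}{10}\rk(G_i).
\]
Because $\rk$, $H\ch{1}$ and $\ch{2}$ are all additive along the short exact sequences $0 \to F_{i-1} \to F_i \to G_i \to 0$, summing these inequalities over $i$ and dividing by $\rk(F) = \sum_i \rk(G_i)$ yields precisely $\xi_H(F) \leq \tfrac12\mu_H(F) - \tfrac{3}{10}$. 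Equivalently, I would invoke the mechanism of \Cref{lem:convexfunction} applied to the function $x \mapsto \tfrac12 x - \tfrac{3}{10}$: since that function is affine it coincides with its own convex roof on $[\tfrac{7}{20},\tfrac{13}{20}]$, so no genuine convexity is needed and nothing is lost in the weighted combination.

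There is no real analytic obstacle here; the only points that require care are bookkeeping. I must confirm that the Harder--Narasimhan factors are genuinely torsion-free, so that \Cref{CorollaryOnSheaves} applies to each $G_i$ — this holds because the $F_i$ may be taken saturated. The one substantive check is that the uniform slope bound $\mu^\pm_H(F) \in [\tfrac{7}{20},\tfrac{13}{20}]$ forces every factor into the \emph{same} linear branch of the piecewise inequality \eqref{Ch1Ch2Inequal}; it is exactly this uniformity that lets the affine estimate transfer from the factors to $F$ without ever appealing to a different, and possibly non-convex, branch of the bound.
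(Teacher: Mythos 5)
Your proposal is correct and is exactly the argument the paper intends: the paper's proof is the one-line remark that one should pass to Harder--Narasimhan factors, apply \Cref{CorollaryOnSheaves} to each (all slopes lying in the single branch $[\tfrac{7}{20},\tfrac{13}{20}]$ where the bound is affine), and conclude by additivity of $\ch{2}$, $H\ch{1}$ and $\rk$. Your write-up just makes the bookkeeping explicit.
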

\begin{proof}
    The proof is similar to \Cref{H0Bound}. Just note that $\xi_H$ is additive and use \Cref{CorollaryOnSheaves}.
\end{proof}

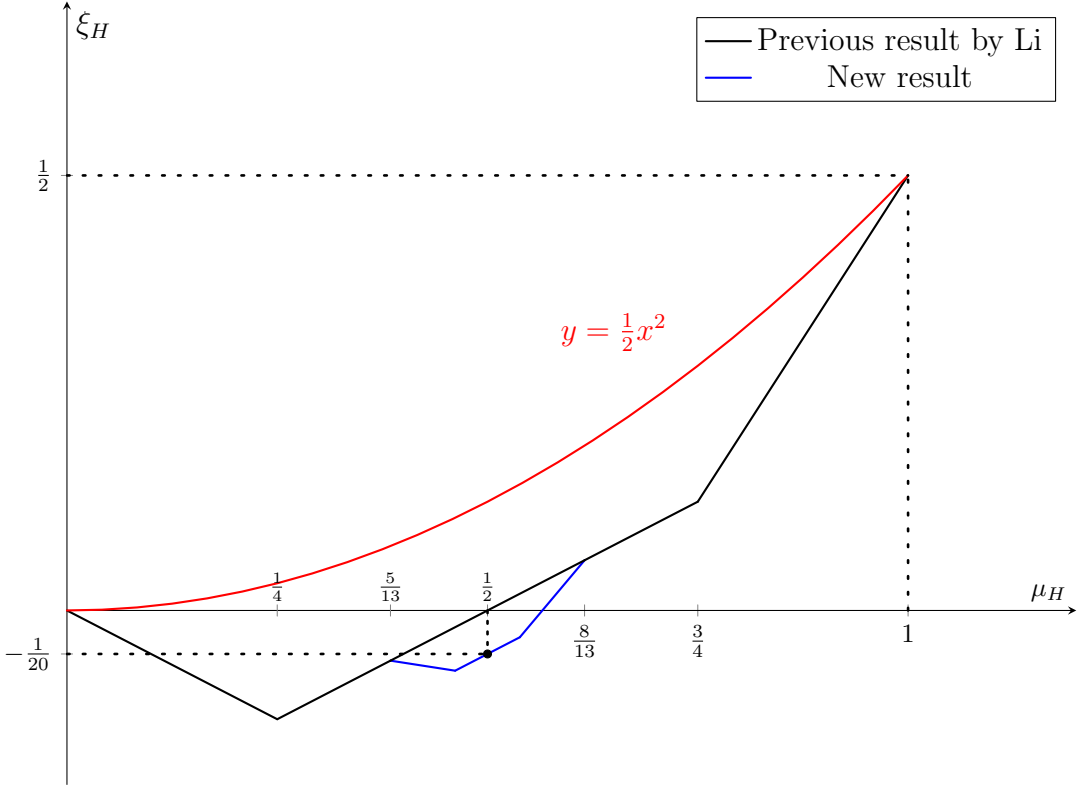
\begin{figure}[H]
    \centering
    \begin{tikzpicture}[line cap=round,line join=round,x=10cm,y=10cm]
  \begin{axis}[
    axis lines=middle,
    xmin=0, xmax=1.2,
    ymin=-0.2, ymax=0.7,
    xlabel={$\mu_H$},
    ylabel = {$\xi_H$},
    ylabel style={font=\Large},
    xtick={8/13, 3/4, 1},
    xticklabels={$\frac{8}{13}$, $\frac{3}{4}$, $1$},
    extra x ticks={1/4, 5/13, 1/2},
    extra x tick labels=\empty,
    ytick={-0.05,0.5},
    yticklabels={$-\frac{1}{20}$, $\frac{1}{2}$},
    width= 15cm, height = 12cm,
    legend style={font=\Large, at={(0.98,0.98)}, anchor=north east}  
  ]
    
    \addplot[thick,domain=0:1/4]{-0.5*x};
    \addplot[blue,thick,domain=5/13:6/13]{-3/20*x};
    \addlegendentry{Previous result by Li};
    \addplot[thick,domain=1/4:5/13]{0.5*x-0.25};

    \addplot[blue,thick,domain=6/13:7/13]{0.5*x - 0.3};
    \addplot[blue,thick,domain=7/13:8/13]{23/20*x-13/20};
    \addlegendentry{New result};
    \addplot[thick,domain=8/13:3/4]{0.5*x-0.25};
    \addplot[thick,domain=3/4:1]{1.5*x-1};
    \addplot[thick,domain=5/13:8/13]{0.5*x-0.25};
    \addplot[red,thick,domain=0:1]{0.5*x*x};
    \node[font=\Large,red] at (axis cs:0.65,0.32) {$y=\frac{1}{2}x^2$};
    
    \node[anchor=south,font=\small] at (axis cs:1/4,0) {$\frac{1}{4}$};
    \node[anchor=south,font=\small] at (axis cs:5/13,0) {$\frac{5}{13}$};
    \node[anchor=south,font=\small] at (axis cs:1/2,0) {$\frac{1}{2}$};
    
    \draw[thick,fill=white] (axis cs:2,5) circle (1.5pt);
    \filldraw (axis cs:2,6) circle (1.5pt);

    \draw [line width=1pt,dash pattern=on 1pt off 5pt] (1,0)-- (1,0.5);
    \draw [line width=1pt,dash pattern=on 1pt off 5pt] (0,0.5)-- (1,0.5);
    \draw [line width=1pt,dash pattern=on 1pt off 3pt] (1/2,0)-- (1/2,-0.05);
    \draw [line width=1pt,dash pattern=on 1pt off 5pt] (0,-0.05)-- (1/2,-0.05);

    \filldraw (axis cs:1/2,-0.05) circle (1.5pt);
  \end{axis}
\end{tikzpicture}

    \caption{BG type inequality on quintic threefolds}
    \label{fig:BGthreefold}
\end{figure}

\begin{theorem}\label{MainTheorem}
     Let $X\subset \mathbb{P}^4_{\mathbb{C}}$ be a smooth quintic $3$-fold and $H: = c_1(\mathcal{O}_X(1))$, $F$ be a $\nu_{\alpha,0,H}$-tilt-stable object in $\Coh^{0,H}(X)$ for some $\alpha >0$. Suppose $\mu_H(F)\in [-1,1]$. then
    \begin{equation}\label{InequalityOnThreefolds}
        \xi_H(F)\leq 
    \begin{cases}
    \frac{-1}{2}|\mu_H(F)| & \text{when } 0\leq |\mu_H(F)| \leq \frac{1}{4}; \\[0ex]
    \frac{1}{2}|\mu_H(F)| - \frac{1}{4} & \text{when } |\mu_H(F)|\in[\frac{1}{4},\frac{5}{13}]\cup[\frac{8}{13},\frac{3}{4}]; \\[0ex]
    \frac{-3}{20}|\mu_H(F)|  & \text{when } \frac{5}{13} \leq |\mu_H(F)|\leq \frac{6}{13}; \\[0ex]
    \frac{1}{2}|\mu_H(F)|-\frac{3}{10} & \text{when } \frac{6}{13}\leq |\mu_H(F)| \leq \frac{7}{13}; \\[0ex]
    \frac{23}{20}|\mu_H(F)| - \frac{13}{20} & \text{when } \frac{7}{13}\leq |\mu_H(F)| \leq \frac{8}{13}; \\[0ex]
    \frac{3}{2}|\mu_H(F)| - 1 & \text{when } \frac{3}{4}\leq |\mu_H(F)| \leq 1.
\end{cases}
    \end{equation}
\end{theorem}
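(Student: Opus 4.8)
The plan is to prove \Cref{MainTheorem} by contradiction, transporting it to the surface inequality \Cref{CorollaryOnSheaves} through the restriction theorem \Cref{RestrictionTheorem}, in close analogy with the proof of \Cref{ChInequalityOnSurface}. Writing $f_X$ for the right-hand side of \eqref{InequalityOnThreefolds} restricted to $(0,1)$, I would first check that $f_X$ is star-shaped along $\beta=0$ and $\beta=1$ and satisfies $f_X(t)\le\tfrac12 t^2$, so that a hypothetical violating object can be replaced, via \Cref{LemmaReductionToStableObjects} with $d=1$, by a reflexive sheaf $F$ that is $\nu_{\alpha,0,H}$-tilt-stable for every $\alpha>0$ and with $F[1]$ being $\nu_{\alpha',1,H}$-tilt-stable for every $\alpha'>\tfrac12$. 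Within this class of objects tilt-stable at both $\beta=0$ and $\beta=1$, the dualities $F\mapsto F^\vee$ and $F\mapsto F^\vee(H)$ act on $(\mu_H,\xi_H)$ by $(\mu_H,\xi_H)\mapsto(-\mu_H,\xi_H)$ and $(\mu_H,\xi_H)\mapsto(1-\mu_H,\xi_H-\mu_H+\tfrac12)$, preserve the class, and leave \eqref{InequalityOnThreefolds} invariant; hence I may assume $\mu_H(F)\in[0,\tfrac12]$. The joint stability at $\beta=0$ and $\beta=1$ is exactly what makes both $F$ and $F(-H)[1]$ lie in $\Coh^{0,H}(X)$ and be $\nu_{\alpha,0,H}$-tilt-semistable, which is the hypothesis of \Cref{RestrictionTheorem}.

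Next I would restrict to a general smooth quintic surface $S_5\in|H|$ using \Cref{RestrictionTheorem} with $m=1$. Letting $\alpha\to0$ and substituting \eqref{eqnu}, this produces a sheaf $F|_{S_5}$ with $p_{H_{S_5}}(F|_{S_5})=p_H(F)$ whose Harder--Narasimhan slopes lie in the interval with endpoints $\tfrac12+\tfrac{\xi_H(F)}{\mu_H(F)}$ and $\tfrac{\xi_H(F)-\frac12\mu_H(F)}{\mu_H(F)-1}$. Feeding the Harder--Narasimhan factors of $F|_{S_5}$ into \Cref{CorollaryOnSheaves} and arguing as in \Cref{lem:convexfunction}, now for the additive invariant $\xi_{H_{S_5}}$, I obtain $\xi_H(F)\le\overline{f_{S_5}}_{\mu^-,\mu^+}(\mu_H(F))$, the concave envelope of the surface bound over the slope window. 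The feature that makes this effective is that $f_{S_5}$ is convex on $[\tfrac{7}{46},\tfrac{39}{46}]$, since its successive slopes $-\tfrac5{14},\tfrac12,\tfrac{19}{14}$ increase, and is affine (equal to $\tfrac12\mu-\tfrac3{10}$) on the central segment $[\tfrac7{20},\tfrac{13}{20}]$; on that segment \Cref{LemmaOnChOnThreefolds} upgrades the factorwise estimate to the exact bound $\xi_H(F)\le\tfrac12\mu_H(F)-\tfrac3{10}$.

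The contradiction is then drawn range by range. The two endpoints of the slope window are squeezed between the classical bound $\xi_H(F)\le\tfrac12\mu_H(F)^2$ and the standing assumption $\xi_H(F)>f_X(\mu_H(F))$; inserting these confines the window, after which the envelope estimate (or, in the central band, the affine estimate) forces $\xi_H(F)$ back below $f_X(\mu_H(F))$. Concretely, for $\tfrac6{13}\le\mu_H(F)\le\tfrac12$ the window lands inside $[\tfrac7{20},\tfrac{13}{20}]$ and \Cref{LemmaOnChOnThreefolds} yields the contradiction at once; the adjacent new segments on $[\tfrac5{13},\tfrac6{13}]$ and, by the $F^\vee(H)$-symmetry, on $[\tfrac7{13},\tfrac8{13}]$ arise as chords of the convex part of $f_{S_5}$ and are precisely the pieces made accessible by being able to restrict to $|H|$ rather than $|2H|$; the small-slope regions $[0,\tfrac14]$ and $[\tfrac14,\tfrac5{13}]$ are handled after twisting by $H$, exactly as in the $0<\mu_H(F)\le\tfrac7{46}$ case of \Cref{ChInequalityOnSurface}, to bring the relevant slopes into the effective range of the Clifford bound \Cref{CliffordType}.

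I expect the principal obstacle to be that the estimate is self-referential: the slope window $[\mu^-,\mu^+]$ depends on $\xi_H(F)$, which is itself bounded only through that window, so a single pass of the restriction--envelope step usually does not close the argument and one must iterate it until the window stabilises inside a segment on which $f_{S_5}$ is affine. Controlling this iteration, and arranging the clean piecewise-linear function $f_X$ to sit below the genuinely more complicated limiting bound while matching the breakpoints $\tfrac7{46},\tfrac7{20},\tfrac{13}{20},\tfrac{39}{46}$ of the surface bound at the correct threefold thresholds $\tfrac14,\tfrac5{13},\tfrac6{13},\tfrac7{13},\tfrac8{13},\tfrac34$, is the delicate computational heart of the argument.
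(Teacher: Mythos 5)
Your strategy for the genuinely new range $\mu_H(F)\in[\tfrac{5}{13},\tfrac{8}{13}]$ is exactly the paper's: reduce via \Cref{LemmaReductionToStableObjects} to an object tilt-stable at $\beta=0$ and $\beta=1$, restrict to $S_5\in|H|$ with \Cref{RestrictionTheorem}, observe via \eqref{eqnu} that the violating hypothesis forces the Harder--Narasimhan slope window of $F|_{S_5}$ into $(\tfrac{7}{20},\tfrac{13}{20})$, and conclude from \Cref{LemmaOnChOnThreefolds}. Note also that your worry about having to iterate is moot there: one pass suffices precisely because the assumed lower bound on $\xi_H(F)$ pins the window inside the affine segment of $f_{S_5}$, and since improving the upper bound on $\xi_H(F)$ only widens the window $\llbracket \tfrac12+\nu_{0,0,H}(F),\tfrac12+\nu_{0,0,H}(F(-H)[1])\rrbracket$, iteration would not gain anything anyway.

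The gap is in the outer ranges. The paper does \emph{not} reprove the inequality on $[0,\tfrac{5}{13}]\cup[\tfrac{8}{13},1]$; it quotes it from \cite[Theorem 5.5]{li_stability_2019}. Your sketch for those ranges would not close as written. Near $\mu_H(F)=0$ the slope window of $F|_{S_5}$ lies in $(0,\tfrac12\mu_H(F)]\subset(0,\tfrac{7}{46}]$, where $f_{S_5}(x)=\tfrac{17}{26}x-\tfrac{2}{13}$, so a single restriction pass only yields $\xi_H(F)\leq\tfrac{17}{26}\mu_H(F)-\tfrac{2}{13}$; this is weaker than the required $-\tfrac12\mu_H(F)$ as soon as $\mu_H(F)>\tfrac{2}{15}$, leaving the interval $(\tfrac{2}{15},\tfrac{5}{13}]$ (and its mirror near $1$) uncovered. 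Your proposed fix --- ``twisting by $H$ as in the $0<\mu_H(F)\leq\tfrac{7}{46}$ case of \Cref{ChInequalityOnSurface}'' --- does not transfer: that surface argument rests on the two-term Riemann--Roch identity of \Cref{RRQuinticSurface} together with $\hom(\mathcal{O},F(H)[2])=\hom(\mathcal{O},F^\vee)$, and the threefold analogue would involve $\ch{3}(F)$, which nothing in this setup controls. Either quote Li's theorem for those ranges, as the paper does, or supply a genuinely different argument there.
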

\begin{proof}
    We argue by contradiction. Without loss of generality, we deal with the case $\mu_H(F)\in (0,1)$. As in \Cref{ChInequalityOnSurface}, we may assume $F$ is everywhere tilt-stable, violating \eqref{InequalityOnThreefolds}, and $F\in\Coh^{0,H}(X)$, and $F(-H)[1]\in\Coh^{0,H}(X)$. The inequality in the range $[0,\frac{5}{13}]\cup[\frac{8}{13},1]$ is just \cite[Theorem 5.5]{li_stability_2019}, and we only need to deal with the case $\mu_H(F)\in [\frac{5}{13},\frac{8}{13}]$. In this range, by our assumption, we have
    \begin{equation*}
        \nu_{0,0,H}(F)\leq \nu_{0,0,H}(F(-H)[1]).
    \end{equation*}
    
    By \Cref{RestrictionTheorem}, for any $\alpha > 0$, the restricted sheaf $F_{S_5}$ satisfies
    \begin{equation*}
        \left[\mu_{H_{S}}^-(F|_{S}),\mu^+_{H_S}(F|_S)\right] \subseteq \left[\frac{1}{2}+\nu_{\alpha,0,H}(F),\frac{1}{2}+ \nu_{\alpha,0,H}(F(-H)[1])\right].
    \end{equation*}
    Letting $\alpha\rightarrow 0$, by our assumption, this spells as
    \begin{equation*}
        \mu^-_{H_{S}}(F|_S) \geq \frac{1}{2}+\nu_{0,0,H}(F) = \frac{\xi_H(F)}{\mu_H(F)} + \frac{1}{2} > \frac{7}{20},
    \end{equation*}
    \begin{equation*}
        \mu^+_{H_{S}}(F|_S) \leq \frac{1}{2}+ \nu_{0,0,H}(F(-H)[1]) = \frac{\xi_H(F) - \frac{1}{2}\mu_H(F)}{\mu_H(F) - 1} < \frac{13}{20}.
    \end{equation*}
    By \Cref{LemmaOnChOnThreefolds}, we have
    \begin{equation*}
        \xi_H(F) = \xi_{H_S}(F|_S) \leq \tfrac{1}{2}\mu_{H_S}(F|_S) - \tfrac{3}{10} =\tfrac{1}{2}\mu_H(F) - \tfrac{3}{10}. 
    \end{equation*}
    This violates our assumption.
\end{proof}

\begin{remark}
    Compared to the result in \cite{li_stability_2019}, our approach yields a stronger bound in the neighborhood of $\frac{1}{2}$. However, near $0$ and $1$, the behavior of the maximal(minimal) slope becomes more intricate, and our method appears to be less effective in those regions.
\end{remark}

\begin{corollary}\label{MainCorollary}
    Let $F$ be a torsion-free $H$-slope-semistable sheaf on $X$, then the numerical Chern characters of $F$ satisfy \eqref{InequalityOnThreefolds}.
\end{corollary}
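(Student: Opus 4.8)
The statement to prove is \Cref{MainCorollary}: that any torsion-free $H$-slope-semistable sheaf $F$ on the quintic threefold $X$ satisfies the piecewise inequality \eqref{InequalityOnThreefolds}. The plan is to reduce this claim about slope-semistable sheaves to \Cref{MainTheorem}, which already establishes the same inequality for $\nu_{\alpha,0,H}$-tilt-stable objects. This parallels exactly the reduction carried out for the surface case in \Cref{CorollaryOnSheaves}, and I would follow that template.

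First I would reduce from semistable to stable. Since every $H$-slope-semistable sheaf admits a Jordan--H\"older filtration whose factors are $H$-slope-stable with the same slope $\mu_H$, and since both $\mu_H$ and $\xi_H$ are additive over short exact sequences (weighted by rank), it suffices to verify \eqref{InequalityOnThreefolds} for each stable factor. Concretely, if each factor $F_i$ satisfies $\xi_H(F_i) \le g(\mu_H(F_i))$ where $g$ denotes the right-hand side of \eqref{InequalityOnThreefolds}, and all factors share the common slope $\mu_H(F_i)=\mu_H(F)$, then averaging gives $\xi_H(F) \le g(\mu_H(F))$ directly, with no convexity argument needed because the slopes coincide. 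So I may assume $F$ is $H$-slope-stable.

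Next I would pass from slope-stability to tilt-stability. By \Cref{LargeVolumeLimit}, an $H$-slope-stable torsion-free sheaf $F$ with $H^{n-1}\ch{1}^{\beta}(F) > 0$ lies in $\Coh^{\beta,H}(X)$ and is $\nu_{\alpha,\beta,H}$-tilt-stable for $\alpha \gg 0$. Taking $\beta = 0$, this requires $H^{2}\ch{1}(F) > 0$, i.e. $\mu_H(F) > 0$; since the inequality \eqref{InequalityOnThreefolds} is invariant under the symmetry $\mu_H \mapsto -\mu_H$ (it depends only on $|\mu_H(F)|$) one may replace $F$ by its dual $F^\vee$ to handle $\mu_H(F) < 0$, and the boundary case $\mu_H(F) = 0$ follows from the classical Bogomolov--Gieseker bound $\xi_H(F) \le \tfrac12\mu_H(F)^2 = 0$, matching the stated bound $-\tfrac12|\mu_H(F)| = 0$ there. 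Thus for $\mu_H(F)\in(0,1]$ the sheaf $F$ is $\nu_{\alpha,0,H}$-tilt-stable for $\alpha \gg 0$, so \Cref{MainTheorem} applies and yields \eqref{InequalityOnThreefolds}.

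The main point requiring care is the bookkeeping at the endpoints and the use of the absolute value symmetry, rather than any substantive analytic obstacle: one must check that the duality $F\mapsto F^\vee$ preserves $H$-slope-semistability of a torsion-free (indeed reflexive, after the stability reduction) sheaf and sends $(\mu_H,\xi_H)$ to $(-\mu_H,-\xi_H)$, so that $g(|\mu_H|)$ is genuinely the right invariant quantity. Given that \Cref{MainTheorem} is already stated for the full range $|\mu_H(F)|\in[-1,1]$ in terms of $|\mu_H|$, the cleanest route is simply to invoke \Cref{LargeVolumeLimit} followed by \Cref{MainTheorem}, exactly mirroring the short proof of \Cref{CorollaryOnSheaves}.
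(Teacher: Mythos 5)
Your argument is correct and follows essentially the same route as the paper: reduce to the $H$-slope-stable case via the filtration by stable factors of equal slope, invoke \Cref{LargeVolumeLimit} to get $\nu_{\alpha,0,H}$-tilt-stability for $\alpha\gg 0$, and conclude by \Cref{MainTheorem}, exactly as in \Cref{CorollaryOnSheaves}. One minor slip: since $\ch{2}(F^\vee)=+\ch{2}(F)$, dualizing sends $(\mu_H,\xi_H)$ to $(-\mu_H,+\xi_H)$ rather than $(-\mu_H,-\xi_H)$, but as \eqref{InequalityOnThreefolds} bounds $\xi_H$ by a function of $|\mu_H|$ only, this does not affect your conclusion.
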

\begin{proof}
    Same as \Cref{CorollaryOnSheaves}.
\end{proof}

\begin{remark}
    The argument above refines the method developed in \cite{li_stability_2019} and \cite{Naoki23_BG_hypersurfaces} by employing the restriction to $|H|$ instead of $|2H|$, together with explicit Clifford-type bounds for plane quintic curves. This improvement leads to sharper bound near $\frac{1}{2}$.

    It remains an open question whether these inequalities suffice to construct a Bridgeland stability condition of Gepner type on the quintic threefold. Indeed, the $(\beta,\alpha)$ in the family of stability conditions $\{\sigma^{a,b}_{\alpha,\beta,H}\}$ constructed in \cite{li_stability_2019} needs to satisfy
    \begin{equation*}
        \alpha^2 + (\beta - \lfloor\beta\rfloor  - \tfrac{1}{2} )^2 > \tfrac{1}{4},
    \end{equation*}
    which is even above the parabola $\alpha = \frac{1}{2}\beta^2$.

    We hope to address these issues in future work.
\end{remark}

\bibliography{gepner}
\bibliographystyle{alpha}

\end{document}